\numberwithin{equation}{section}
\theoremstyle{definition}
\numberwithin{equation}{section}
\newcommand{\ncom}{\newcommand}
\ncom{\beq}{\begin{equation}}
\ncom{\eeq}{\end{equation}}
\ncom{\bea}{\begin{eqnarray*}}
\ncom{\eea}{\end{eqnarray*}}
\ncom{\beqa}{\begin{eqnarray}}
\ncom{\eeqa}{\end{eqnarray}}
\ncom{\nno}{\nonumber}
\ncom{\non}{\nonumber}
\ncom{\ds}{\displaystyle}
\ncom{\half}{\frac{1}{2}}
\ncom{\mbx}{\makebox{.25cm}}
\ncom{\hs}{\mbox{\hspace{.25cm}}}
\ncom{\rar}{\rightarrow}
\ncom{\Rar}{\Rightarrow}
\ncom{\noin}{\noindent}
\ncom{\bc}{\begin{center}}
\ncom{\ec}{\end{center}}
\ncom{\sz}{\scriptsize}
\ncom{\rf}{\ref}
\ncom{\s}{\sqrt{2}}
\ncom{\sgm}{\sigma}
\ncom{\Sgm}{\Sigma}
\ncom{\psgm}{\sigma^{\prime}}
\ncom{\dt}{\delta}
\ncom{\Dt}{\Delta}
\ncom{\lmd}{\lambda}
\ncom{\Lmd}{\Lambda}
\ncom{\Th}{\Theta}
\ncom{\e}{\eta}
\ncom{\eps}{\epsilon}
\ncom{\pcc}{\stackrel{P}{>}}
\ncom{\lp}{\stackrel{L_{p}}{>}}
\ncom{\dist}{{\rm\,dist}}
\ncom{\sspan}{{\rm\,span}}
\ncom{\re}{{\rm Re\,}}
\ncom{\im}{{\rm Im\,}}
\ncom{\sgn}{{\rm sgn\,}}
\ncom{\ba}{\begin{array}}
\ncom{\ea}{\end{array}}
\ncom{\hone}{\mbox{\hspace{1em}}}
\ncom{\htwo}{\mbox{\hspace{2em}}}
\ncom{\hthree}{\mbox{\hspace{3em}}}
\ncom{\hfour}{\mbox{\hspace{4em}}}
\ncom{\vone}{\vskip 2ex}
\ncom{\vtwo}{\vskip 4ex}
\ncom{\vonee}{\vskip 1.5ex}
\ncom{\vthree}{\vskip 6ex}
\ncom{\vfour}{\vspace*{8ex}}
\ncom{\norm}{\|\;\;\|}
\ncom{\integ}[4]{\int_{#1}^{#2}\,{#3}\,d{#4}}
\ncom{\vspan}[1]{{{\rm\,span}\{ #1 \}}}
\ncom{\dm}[1]{ {\displaystyle{#1} } }
\ncom{\ri}[1]{{#1} \index{#1}}
\newtheorem{theorem}{\bf Theorem}[section]
\newtheorem{remark}{\bf Remark}[section]
\newtheorem{proposition}{Proposition}[section]
\newtheorem{lemma}{Lemma}[section]
\newtheoremstyle
    {remarkstyle}
    {}
    {11pt}
    {}
    {}
    {\bfseries}
    {:}
    {     }
    {\thmname{#1} \thmnumber{#2} }
\theoremstyle{remarkstyle}
\def\eps{\varepsilon}
\begin{document}
\title{Generalized fractional birth process}
\author[Kuldeep Kumar Kataria]{Kuldeep Kumar Kataria}
\address{Kuldeep Kumar Kataria, Department of Mathematics, Indian Institute of Technology Bhilai, Raipur 492015, India.}
 \email{kuldeepk@iitbhilai.ac.in}
\author[Mostafizar Khandakar]{Mostafizar Khandakar}
\address{Mostafizar Khandakar, Department of Mathematics, Indian Institute of Technology Bhilai, Raipur 492015, India.}
\email{mostafizark@iitbhilai.ac.in}
\subjclass[2010]{Primary: 60G22; Secondary: 60J80}
\keywords{birth process; state dependent process; fractional Poisson process.}
\date{October 04, 2021}
\begin{abstract}
In this paper, we introduce a generalized birth process (GBP) which performs jumps of size $1,2,\dots,k$ whose rates depend on the state of the process at time $t\geq0$. We derive a non-exploding condition for it. The system of differential equations that governs its state probabilities is obtained. In this governing system of differential equations, we replace the first order derivative with Caputo fractional derivative to obtain a fractional variant of the GBP, namely, the generalized fractional birth process (GFBP). The Laplace transform of the state probabilities of this fractional variant is obtained whose inversion yields its one-dimensional distribution. It is shown that the GFBP is equal in distribution to a time-changed version of the GBP, and this result is used to obtain a non-exploding condition for it. A limiting case of the GFBP is considered in which jump of any size $j\ge1$ is possible. Also, we discuss a state dependent version of it.
\end{abstract}

\maketitle
\section{Introduction}
The pure birth process is a counting process in which the jump of unit size occurs in an infinitesimal interval of length $h$ with probability $\lambda_n h+o(h)$, that is, the jump intensity $\lambda_n>0$ depends on the state of the process at epoch $t\geq0$. Orsingher and Polito (2010) introduced and studied a fractional version of the pure birth process, namely, the fractional pure birth process (FPBP). It is denoted by $\{\mathcal{N}^{\alpha}(t)\}_{t\ge0}$, $0<\alpha\le 1$ whose state probabilities $q^{\alpha}(n,t)=\mathrm{Pr}\{\mathcal{N}^{\alpha}(t)=n\}$, $n\ge1$ solve the following system of fractional differential equations:
\begin{equation*}
\frac{\mathrm{d}^{\alpha}}{\mathrm{d}t^{\alpha}}q^{\alpha}(n,t)=-\lambda_{n}q^{\alpha}(n,t)+\lambda_{n-1}q^{\alpha}(n-1,t),
\end{equation*}
with the initial conditions
\begin{equation*}
q^{\alpha}(n,0)=\begin{cases}
1,\ \ n=1,\\
0,\ \ n\ge2.
\end{cases}
\end{equation*}
Here,  $\frac{\mathrm{d}^{\alpha}}{\mathrm{d}t^{\alpha}}$ is the Caputo fractional derivative defined  as (see Kilbas {\it et al.} (2006))
\begin{equation*}
\frac{\mathrm{d}^{\alpha}}{\mathrm{d}t^{\alpha}}f(t):=\left\{
\begin{array}{ll}
\dfrac{1}{\Gamma{(1-\alpha)}}\displaystyle\int^t_{0} (t-s)^{-\alpha}f'(s)\,\mathrm{d}s,\ \ 0<\alpha<1,\\\\
f'(t),\ \ \alpha=1.
\end{array}
\right.
\end{equation*}
Its Laplace transform is given by (see Kilbas et al. (2006), Eq. (5.3.3))
\begin{equation}\label{laplace}
\mathcal{L}\left(\frac{\mathrm{d}^{\alpha}}{\mathrm{d}t^{\alpha}}f(t);s\right)=s^{\alpha}\tilde{f}(s)-s^{\alpha-1}f(0),\ \ s>0.
\end{equation}
For $\alpha=1$, the FPBP reduces to the pure birth process (see Feller (1968)). 

The Poisson process is another important pure birth type process mainly used in modeling the count data. Its jump intensity is a constant $\lambda>0$, that is, it does not depend on the state of the process. It's a L\'evy as well as a renewal process with exponentially distributed waiting times. It has certain limitations in modeling the phenomena with long memory due to its light-tailed distributed waiting times. To overcome these limitations, many researchers have introduced and studied various fractional generalizatons of the Poisson process in the past two decades. These fractional variants are generally characterized by their heavy-tailed distributed waiting times. Some commonly used fractional versions of the Poisson process are the time fractional Poisson process (TFPP) (see Beghin and Orsingher (2009)), the space fractional Poisson process (SFPP) (see Orsingher and Polito (2012)) and their generalization, {\it viz.}, the space-time fractional Poisson process (STFPP) (see Orsingher and Polito (2012)) {\it etc}.

Di Crescenzo {\it et al.} (2016) introduced and studied a generalization of the Poisson process, namely, the generalized counting process (GCP) which performs $k$ kinds of jumps of amplitude $1,2,\dots,k$ with positive rates $\lambda_{1}, \lambda_{2},\dots,\lambda_{k}$, respectively. It is important to note that the jump intensities in GCP do not depend on its state. For $k=1$, the GCP reduces to the Poisson process. Moreover, they considered a fractional variant of the GCP, namely, the generalized fractional counting process (GFCP) by taking the Caputo fractional derivative in the system of differential equations that governs the state probabilities of the GCP. A limiting case of the GFCP, namely, the convoluted fractional Poisson process (CFPP) is obtained by letting $k\to \infty$ and suitably choosing the intensity parameters (see Kataria and Khandakar (2021a)). For $k=1$, the GFCP reduces to the TFPP. For more properties and special cases of the GFCP, we refer the reader to Kataria and Khandakar (2021b).

In this paper, we introduce a generalized birth process (GBP) $\{\mathcal{X}(t)\}_{t\ge0}$ which performs jumps of size $1,2,\dots,k$ with positive rates $\lambda_{(n)_{1}}, \lambda_{(n)_{2}},\dots,\lambda_{(n)_{k}}$, respectively. Here, $n$ denotes the state of the process at any given time $t\ge0$. We obtain a condition of no explosion for the GBP. We consider a fractional version of it, namely,  the generalized fractional birth process (GFBP)  by taking Caputo fractional derivative in the system of differential equations thats governs the state probabilities of GBP. We denote it by $\{\mathcal{X}^{\alpha}(t)\}_{t\ge0}$, $0<\alpha \le 1$ and its
state probabilities  $p^{\alpha}(n,t)=\mathrm{Pr}\{\mathcal{X}^{\alpha}(t)=n\}$ satisfy the following system of fractional differential equations:
\begin{equation*}
\frac{\mathrm{d}^{\alpha}}{\mathrm{d}t^{\alpha}}p^{\alpha}(n,t)=-\sum_{i=1}^{k}\lambda_{(n)_{i}} p^{\alpha}(n,t)+	\sum_{i=1}^{\min\{n,k\}}\lambda_{(n-i)_{i}}p^{\alpha}(n-i,t),\ \ n\ge n_{0},
\end{equation*}
with the initial conditions
\begin{equation*}
p^{\alpha}(n,0)=\begin{cases}
1,\ \ n=n_{0},\\
0,\ \ n>n_{0}.
\end{cases}
\end{equation*}
Here, $n_0$ is a fixed non-negative integer. The GFBP reduces to FPBP for $n_{0}=k=1$ and distinct $\lambda_{(n)_{1}}$'s for all $n\ge1$. It is shown that the TFPP, SFPP, STFPP, GFCP and CFPP are its other particular cases. We obtain the Laplace transform of its state probabilities whose inversion yields its one-dimensional distribution.  We establish the following time-changed relationship between the GBP and GFBP:
\begin{equation*}
\mathcal{X}^{\alpha}(t)\overset{d}{=}\mathcal{X}(T_{2\alpha}(t)),
\end{equation*}
where $\{T_{2\alpha}(t)\}_{t>0}$ is a random process independent of the GBP whose distribution solves (\ref{diff}). A limiting case of the GFBP is discussed which has jumps of any size $j\ge 1$. We obtain the state probabilities and a time-changed relation for this special case. Later, we discuss a state dependent version of the GFBP.
\section{Generalized fractional birth process}\label{section3}
In this section, we introduce a generalized birth process (GBP) which performs jumps of size $1,2,\dots,k$ with positive rates $\lambda_{(n)_{1}}, \lambda_{(n)_{2}},\dots,\lambda_{(n)_{k}}$, respectively. Here, $n$ denotes the state of the process at a given time $t\geq0$. We denote the GBP by $\{\mathcal{X}(t)\}_{t\ge0}$ and assume that it has $n_{0}$ progenitors at time $t=0$, that is, $\mathrm{Pr}\{\mathcal{X}(0)=n_{0}\}=1$, where $n_{0}\ge 0$ is a fixed integer. The probability of its jumps in an infinitesimal interval of length $h$ is given by 
\begin{equation*}
\mathrm{Pr}\{\mathcal{X}(t+h)=n+i|\mathcal{X}(t)=n\}=\begin{cases*}
1-\sum_{i=1}^{k}\lambda_{(n)_{i}}h+o(h),\ \ i=0,\\
\lambda_{(n)_{i}}h+o(h),  \  i=1,2,\dots,k,\\
o(h),\ \ i>k.
\end{cases*}
\end{equation*}
Note that the rates of jumps depend on the actual state of the process at time $t$ and thus the transition probabilitiy is a function of states and jump sizes.

 Let us denote $p(n,t)=\mathrm{Pr}\{\mathcal{X}(t)=n\}$, $n\ge n_{0}$. With these assumptions, we have 
 \begin{equation*}
 p(n,t+h)=p(n,t)\left(1-\sum_{i=1}^{k}\lambda_{(n)_{i}}h\right)+\sum_{i=1}^{\min\{n,k\}}\lambda_{(n-i)_{i}}p(n-i,t)h+o(h)
 \end{equation*}
 which reduces to the following form as $h\to 0$:
\begin{equation}\label{model1}
\frac{\mathrm{d}}{\mathrm{d}t}p(n,t)=-\sum_{i=1}^{k}\lambda_{(n)_{i}} p(n,t)+	\sum_{i=1}^{\min\{n,k\}}\lambda_{(n-i)_{i}}p(n-i,t),\ \ n\ge n_{0},
\end{equation}
with the initial conditions
\begin{equation*}
p(n,0)=\begin{cases}
1,\ \ n=n_{0},\\
0,\ \ n>n_{0}.
\end{cases}
\end{equation*}

A fractional version of the GBP, namely,  the generalized fractional birth process (GFBP) can be obtained by taking Caputo fractional derivative in the system of differential equations  given in (\ref{model1}). We denote it by $\{\mathcal{X}^{\alpha}(t)\}_{t\ge0}$, $0<\alpha \le 1$, and its
 state probabilities  $p^{\alpha}(n,t)=\mathrm{Pr}\{\mathcal{X}^{\alpha}(t)=n\}$ satisfy the following system of fractional differential equations:
\begin{equation}\label{model2}
\frac{\mathrm{d}^{\alpha}}{\mathrm{d}t^{\alpha}}p^{\alpha}(n,t)=-\sum_{i=1}^{k}\lambda_{(n)_{i}} p^{\alpha}(n,t)+	\sum_{i=1}^{\min\{n,k\}}\lambda_{(n-i)_{i}}p^{\alpha}(n-i,t),\ \ n\ge n_{0},
\end{equation}
with the initial conditions
\begin{equation*}
p^{\alpha}(n,0)=\begin{cases}
1,\ \ n=n_{0},\\
0,\ \ n>n_{0}.
\end{cases}
\end{equation*}
Equivalently,
\begin{equation}\label{model5}
\frac{\mathrm{d}^{\alpha}}{\mathrm{d}t^{\alpha}}p^{\alpha}(n,t)=-\sum_{i=1}^{k}\lambda_{(n)_{i}} p^{\alpha}(n,t)+	\sum_{i=1}^{k}\lambda_{(n-i)_{i}}p^{\alpha}(n-i,t),\ \ n\ge n_{0},
\end{equation}
 as $p^{\alpha}(n-i,t)=0$ for all $i>n$. For $\alpha=1$, we get an equivalent expression for the GBP.
 
\begin{remark} 
	
 The TFPP is obtained as a particular case of the GFBP
 for $n_{0}=0$, $k=1$ and $\lambda_{(n)_{1}}=\lambda$ for all $n\ge0$. The GFBP reduces to  the FPBP when $n_{0}=k=1$ and $\lambda_{(n)_{1}}$'s are distinct for all $n\ge1$. Also, for $n_{0}=0$, $\lambda_{(n)_{i}}=\lambda_{i}$ for all $n\ge0$ and $1\le i \le k$, the GFBP reduces to the GFCP. In case of infinite jumps, that is, letting $k\to \infty$ and  taking $\lambda_{(n)_{i}}=\lambda_{i}=\beta_{i-1}-\beta_{i}$, $i\ge 1$, $n\ge0$, the GFCP further reduces to the CFPP, where 
  $\{\beta_{i}\}_{i\in\mathbb{Z}}$ be a sequence of intensity parameters such that $\beta_{i}=0$ for all $i<0$ and $\beta_{i}>\beta_{i+1}>0$ for all $i\geq0$ with $\lim\limits_{i\to\infty}\beta_{i+1}/\beta_{i}<1$. However, if we choose $\lambda_{(n)_{i}}=\lambda_{i}=(-1)^{i+1}\lambda^{\beta}\beta(\beta-1)\cdots(\beta-i+1)/i!$, $i\ge 1$, $n\ge0$ for some $0<\beta\leq1$ and letting $k\to \infty$, we get the STFPP which reduces to the SFPP for $\alpha=1$.
 \end{remark}
 
The next result gives the sufficient condition for no explosion in GBP.

 \begin{theorem}\label{explosion}
 	If $\sum_{m=n_{0}}^{\infty}\left(\sum_{i=1}^{k}\sum_{j=1}^{i}\lambda^{2}_{(m-j+1)_{i}}\right)^{-1/2}=\infty$ then $\sum_{m=n_{0}}^{\infty}p(m,t)=1$.
 \end{theorem}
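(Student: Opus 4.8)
The plan is to construct a Lyapunov function on the state space that turns the Kolmogorov system $(\ref{model1})$ into a differential inequality, from which non-explosion follows via a first-passage estimate. For $l\ge n_{0}$ set
\[
c_{l}:=\Bigl(\sum_{i=1}^{k}\sum_{j=1}^{i}\lambda^{2}_{(l-j+1)_{i}}\Bigr)^{1/2},\qquad g(m):=1+\sum_{l=n_{0}}^{m-1}\frac{1}{c_{l}}\quad(m\ge n_{0}),
\]
so $g$ is non-decreasing, $g\ge 1$, and---precisely because of the hypothesis---$g(m)\uparrow\infty$ as $m\to\infty$.

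The heart of the argument, and the step I expect to be the real obstacle, is the estimate
\[
\sum_{i=1}^{k}\lambda_{(m)_{i}}\bigl(g(m+i)-g(m)\bigr)\le k^{3/2}\qquad\text{for every }m\ge n_{0}.
\]
Since $g(m+i)-g(m)=\sum_{l=m}^{m+i-1}c_{l}^{-1}$, an interchange of summations rewrites the left-hand side as $\sum_{l=m}^{m+k-1}c_{l}^{-1}\sum_{i=l-m+1}^{k}\lambda_{(m)_{i}}$. Now for $m\le l\le m+k-1$ the summands of $c_{l}^{2}$ with second index equal to $m$ (those with $l-j+1=m$) are exactly $\lambda^{2}_{(m)_{i}}$ with $l-m+1\le i\le k$, hence $c_{l}^{2}\ge\sum_{i=l-m+1}^{k}\lambda^{2}_{(m)_{i}}$; Cauchy--Schwarz applied to this subsum of at most $k$ terms gives $\bigl(\sum_{i=l-m+1}^{k}\lambda_{(m)_{i}}\bigr)^{2}\le k\,c_{l}^{2}$, so each inner factor is at most $\sqrt{k}$ and summing over the $k$ admissible values of $l$ yields $k^{3/2}$. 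This is where the $\ell^{2}$-type shape of $c_{l}$, and hence of the hypothesis, is forced.

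With this in hand, the next step is to fix $n>n_{0}$ and consider the GBP absorbed the first time it reaches a state $\ge n$---necessarily one of $n,n+1,\dots,n+k-1$, since jumps have size at most $k$---with state probabilities $q_{n}(m,t)$. This is a finite Markov chain, so $\sum_{m}q_{n}(m,t)=1$ and each $q_{n}(m,\cdot)$ is $C^{1}$; moreover $q_{n}(m,t)=p(m,t)$ for $n_{0}\le m\le n-1$, because $\mathcal{X}(t)$ is non-decreasing. Differentiating $\psi_{n}(t):=\sum_{m=n_{0}}^{n+k-1}g(m)\,q_{n}(m,t)$ and reorganizing the inflow terms through the substitution $m\mapsto m-i$ (a routine computation; the absorbing states contribute no outflow) collapses everything to
\[
\frac{\mathrm{d}}{\mathrm{d}t}\psi_{n}(t)=\sum_{m=n_{0}}^{n-1}q_{n}(m,t)\sum_{i=1}^{k}\lambda_{(m)_{i}}\bigl(g(m+i)-g(m)\bigr)\le k^{3/2}\sum_{m=n_{0}}^{n-1}q_{n}(m,t)\le k^{3/2},
\]
by the estimate above; integrating, $\psi_{n}(t)\le\psi_{n}(0)+k^{3/2}t=g(n_{0})+k^{3/2}t=1+k^{3/2}t$.

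Finally, since $g$ is non-decreasing, $\psi_{n}(t)\ge g(n)\sum_{m=n}^{n+k-1}q_{n}(m,t)=g(n)\,\mathrm{Pr}\{T_{n}\le t\}$ with $T_{n}:=\inf\{t\ge 0:\mathcal{X}(t)\ge n\}$, so $\mathrm{Pr}\{T_{n}\le t\}\le(1+k^{3/2}t)/g(n)$, which tends to $0$ as $n\to\infty$. Writing $T_{\infty}=\lim_{n}T_{n}$ for the explosion time, this gives $\mathrm{Pr}\{T_{\infty}\le t\}=\lim_{n}\mathrm{Pr}\{T_{n}\le t\}=0$ for every $t\ge 0$, i.e.\ the GBP does not explode, which is exactly the assertion $\sum_{m=n_{0}}^{\infty}p(m,t)=\mathrm{Pr}\{\mathcal{X}(t)<\infty\}=1$. (When $n_{0}=k=1$ one has $c_{l}=\lambda_{(l)_{1}}$ and the criterion becomes $\sum_{l}\lambda_{(l)_{1}}^{-1}=\infty$, recovering the classical non-explosion condition for pure birth processes.)
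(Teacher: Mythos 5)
Your argument is correct, but it is a genuinely different route from the paper's. The paper proceeds analytically: it sums the first $n+1$ forward equations so that the sum telescopes, integrates to get $1-S(m,t)=\sum_{i=1}^{k}\sum_{j=1}^{i}\lambda_{(m-j+1)_{i}}\int_{0}^{t}p(m-j+1,s)\,\mathrm{d}s$, bounds the limit $\eta(t)=\lim_{m\to\infty}(1-S(m,t))$ by Cauchy--Schwarz, and then sums over $m$ to conclude $\eta(t)=0$ when the series of reciprocal square roots diverges --- the direct generalization of Feller's classical argument. You instead build the Lyapunov function $g(m)=1+\sum_{l=n_{0}}^{m-1}c_{l}^{-1}$ from the same weights, prove the drift bound $\sum_{i=1}^{k}\lambda_{(m)_{i}}(g(m+i)-g(m))\le k^{3/2}$ (your verification of this is sound: the subsum of $c_{l}^{2}$ with $l-j+1=m$ is exactly $\sum_{i=l-m+1}^{k}\lambda_{(m)_{i}}^{2}$, and Cauchy--Schwarz on at most $k$ terms gives the factor $\sqrt{k}$), and run a first-passage estimate on the chain absorbed upon reaching level $n$, yielding $\mathrm{Pr}\{T_{n}\le t\}\le(1+k^{3/2}t)/g(n)\to0$. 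Both proofs exploit the $\ell^{2}$ shape of the hypothesis through Cauchy--Schwarz, but they buy different things: the paper's version is shorter and works directly with the state probabilities, while yours is probabilistic, works only with finite absorbed chains (so no manipulation of the infinite system is needed beyond its probabilistic meaning), and gives an explicit quantitative decay rate for the passage probabilities, hence for the explosion probability. One small presentational point: the identity $q_{n}(m,t)=p(m,t)$ for $m\le n-1$ is not actually needed in your computation (everything is expressed through $q_{n}$), and your final identification of $\sum_{m\ge n_{0}}p(m,t)$ with $\mathrm{Pr}\{\mathcal{X}(t)<\infty\}$ rests on the paper's own convention that $p(m,t)=\mathrm{Pr}\{\mathcal{X}(t)=m\}$ for the (minimal) process, so it is consistent with the statement being proved.
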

 \begin{proof}
 	Let $S(n_{0}+n,t)=\sum_{m=n_{0}}^{n_{0}+n}p(m,t)$, $n\ge 0$. On taking $\alpha=1$ in the system of differential equations  (\ref{model5}) and adding its first $n+1$ equations, we get
 	\begin{equation*}
 	\sum_{m=n_{0}}^{n_{0}+n}\frac{\mathrm{d}}{\mathrm{d}t}p(m,t)=-\sum_{m=n_{0}}^{n_{0}+n}\sum_{i=1}^{k}\lambda_{(m)_{i}} p(m,t)+	\sum_{m=n_{0}}^{n_{0}+n}\sum_{i=1}^{k}\lambda_{(m-i)_{i}}p(m-i,t)
 	\end{equation*}
 	which reduces to
 	\begin{equation*}
 	\frac{\mathrm{d}}{\mathrm{d}t}S(n_{0}+n,t)=-\sum_{j=1}^{k}\sum_{i=j}^{k}\lambda_{(n_{0}+n-j+1)_{i}}p(n_{0}+n-j+1,t),\ \ n\ge 0.
 	\end{equation*}
 	Equivalently,
 	\begin{align}\label{above}
 	\frac{\mathrm{d}}{\mathrm{d}t}S(m,t)&=-\sum_{j=1}^{k}\sum_{i=j}^{k}\lambda_{(m-j+1)_{i}}p(m-j+1,t)\nonumber\\
 	&=-\sum_{i=1}^{k}\sum_{j=1}^{i}\lambda_{(m-j+1)_{i}}p(m-j+1,t), \ \ m\ge n_{0}.
 	\end{align}
 	Note that  $S(m,0)=1$ for all $m\ge n_{0}$. On integrating (\ref{above}), we get
 	\begin{equation}\label{cauchy apply}
 	1-S(m,t)=\sum_{i=1}^{k}\sum_{j=1}^{i}\lambda_{(m-j+1)_{i}}\int_{0}^{t}p(m-j+1,s)\mathrm{d}s.
 	\end{equation}
 	Let $\eta(t)=\lim\limits_{m\to \infty}(1-S(m,t))$. Using the monotonicity of $S(m,t)$ and applying the Cauchy-Schwarz inequality in (\ref{cauchy apply}) successively, we get
 	\begin{align*}
 	0\le \eta(t)&\le\sum_{i=1}^{k}\left(\sum_{j=1}^{i}\lambda^{2}_{(m-j+1)_{i}}\right)^{1/2}\left(\sum_{j=1}^{i}\left(\int_{0}^{t}p(m-j+1,s)\mathrm{d}s\right)^{2}\right)^{1/2}\\
 	&\le \left(\sum_{i=1}^{k}\sum_{j=1}^{i}\lambda^{2}_{(m-j+1)_{i}}\right)^{1/2}\left(\sum_{i=1}^{k}\sum_{j=1}^{i}\left(\int_{0}^{t}p(m-j+1,s)\mathrm{d}s\right)^{2}\right)^{1/2}
 	\\
 	&\le \left(\sum_{i=1}^{k}\sum_{j=1}^{i}\lambda^{2}_{(m-j+1)_{i}}\right)^{1/2}\sum_{i=1}^{k}\sum_{j=1}^{i}\int_{0}^{t}p(m-j+1,s)\mathrm{d}s. 
 	\end{align*}
 	Thus,
 	\begin{align*}
 	\sum_{m=n_{0}}^{n_{0}+n}\left(\sum_{i=1}^{k}\sum_{j=1}^{i}\lambda^{2}_{(m-j+1)_{i}}\right)^{-1/2}\eta(t)&\le\sum_{i=1}^{k}\sum_{j=1}^{i}\int_{0}^{t}\sum_{m=n_{0}}^{n_{0}+n}p(m-j+1,s)\mathrm{d}s\\
 	&\le k^{2}\int_{0}^{t}S(n_{0}+n,s)\mathrm{d}s.
 	\end{align*}
 	If $\sum_{m=n_{0}}^{\infty}\left(\sum_{i=1}^{k}\sum_{j=1}^{i}\lambda^{2}_{(m-j+1)_{i}}\right)^{-1/2}=\infty$ then $\eta(t)=0$ for all $t$. This implies $S(n_{0}+n,t)\to 1$ as $n\to \infty$, that is, $\sum_{m=n_{0}}^{\infty}p(m,t)=1$.
 \end{proof}
 \begin{remark}
 	On taking $n_{0}=k=1$ and $\lambda_{(n)_{1}}$'s distinct for all $n\ge1$ in Theorem \ref{explosion}, we get the no explosion condition, that is, $\sum_{n=1}^{\infty}\lambda_{(n)_{1}}^{-1}=\infty$	for the pure birth process (see Feller (1968), p. 452).
 \end{remark}

Let $\tilde{p}^{\alpha}(n,s)=\displaystyle \int_{0}^{\infty}e^{-st}{p}^{\alpha}(n,t)\mathrm{d}t$, $s>0$ denote the Laplace transform of the state probabilities of GFBP. In Proposition \ref{firstp}, we obtain the explicit expressions for $\tilde{p}^{\alpha}(n,s)$, $n\ge n_{0}$.

 First, we set some notations.

Let  $I_{n\times n}$ denote an identity matrix,  $O_{n\times m}$ be a zero matrix and $\bar{e}^{j}_{n}$ be a $n$tuple unit row vector with unity at $j$th place and $0$ elsewhere. For $m>n$, we define a matrix $Q$ of order $n\times m$ as $Q_{n\times m}=\begin{bmatrix}
I_{n\times n}&O_{n\times (m-n)}\end{bmatrix}$. For any positive integer $k$, let $\Theta_{n}^{k}\subset \mathbb{R}^{n}$ be defined  as follows: $\Theta_{1}^{k}=\{(1)\}$, $k\ge 1$ and 
\begin{equation}\label{theta}
\Theta_{n}^{k}=\begin{cases*}
\overset{k}{\underset{i=1}{\cup}}\{\Theta_{n-i}^{k} Q_{(n-i)\times n}+i\bar{e}^{n-i+1}_{n}\},\ \ 1\le k \le n-1,\\
\overset{n-1}{\underset{i=1}{\cup}}\{\Theta_{n-i}^{n} Q_{(n-i)\times n} +i\bar{e}^{n-i+1}_{n}\}\cup \{n\bar{e}^{1}_{n}\},\ \ k\ge n,
\end{cases*}
\end{equation}
where $\Theta_{n-i}^{k} Q_{(n-i)\times n}\coloneqq\{\bar{x}Q_{(n-i)\times n}:\bar{x}=(x_{1},x_{2},\dots,x_{n-i})\in \Theta_{n-i}^{k}\}$. 

Using (\ref{theta}), we explicitly write the set $\Theta_{n}^{k}$ for $n=2,3,4$ as follows:
\begin{align*}
\Theta_{2}^{k}&=\begin{cases*}
\{(1,1)\},\ \ k=1,\\
\{(1,1), (2,0)\},\ \ k\ge 2,
\end{cases*}\\[7pt]
\Theta_{3}^{k}&=\begin{cases*}
\{(1,1,1)\},\ \ k=1,\\
\{(1,1,1), (1,2,0), (2,0,1)\},\ \ k=2,\\
\{(1,1,1), (1,2,0), (2,0,1), (3,0,0)\},\ \ k\ge 3,
\end{cases*}\\[7pt]
\Theta_{4}^{k}&=\begin{cases*}
\{(1,1,1,1)\},\ \ k=1,\\
\{(1,1,1,1), (1,2,0,1), (1,1,2,0), (2,0,1,1), (2,0,2,0)\},\ \ k=2,\\
\{(1,1,1,1), (1,2,0,1), (1,1,2,0),\\
\ \ (1,3,0,0), (2,0,1,1), (2,0,2,0), (3,0,0,1)\},\ \ k=3,\\
\{(1,1,1,1), (1,2,0,1), (1,1,2,0), (1,3,0,0),\\
\ \ (2,0,1,1), (2,0,2,0), (3,0,0,1), (4,0,0,0)\},\ \ k\ge4.
\end{cases*}
\end{align*}

For $n\ge 1$, let $\Lambda_{n}=\mathbb{N}^{n}_{0}\setminus\{1\le j\le n-1:x_{j+1}=0\}$ where $(x_{1},x_{2},\dots,x_{n})\in \Theta_{n}^{k}$ and $\mathbb{N}^{n}_{0}=\{0,1,\dots,n\}$. Let $n^{*}=|\Lambda_{n}|$ and 
\begin{equation}\label{(j)}
(j)_{l}=\begin{cases*}
0,\ \ l=1,\\
\inf\{(j)_{l-1}<j\leq n^{*}:x_{j+1}\neq 0\},\ \ 1<l<n^{*},\\
n,\  \ l=n^{*}.
\end{cases*}
\end{equation}

\begin{proposition}\label{firstp}
 For $n=n_0$, the Laplace transform of the state probability of GFBP is given by
	\begin{equation}\label{bgsda22}
\tilde{p}^{\alpha}(n_{0},s)=	\dfrac{s^{\alpha-1}}{s^{\alpha}+\sum_{i=1}^{k}\lambda_{(n_{0})_{i}}}
	\end{equation}
	and for $n=n_0+m$, $m\ge 1$, it is given by
	\begin{equation}\label{laplacem}
	\tilde{p}^{\alpha}(n_{0}+m,s)=\begin{cases*}
	\displaystyle\sum_{\Theta_{m}^{k}}\frac{s^{\alpha-1}\prod_{j=0}^{m-1}\lambda_{(n_{0}+j)_{x_{j+1}}}}{\underset{j\in \Lambda_{m}}{\prod}\left(s^{\alpha}+\sum_{i=1}^{k}\lambda_{(n_{0}+j)_{i}}\right)},\ \ 1\le k\le m-1,\\
	\displaystyle\sum_{\Theta_{m}^{m}}\frac{s^{\alpha-1}\prod_{j=0}^{m-1}\lambda_{(n_{0}+j)_{x_{j+1}}}}{\underset{j\in \Lambda_{m}}{\prod}\left(s^{\alpha}+\sum_{i=1}^{k}\lambda_{(n_{0}+j)_{i}}\right)},\ \  k\ge m,
	\end{cases*}
	\end{equation}
	where $(x_{1},x_{2},\dots,x_{m})\in \Theta_{m}^{k}$ and $\lambda_{(n)_0}$ is interpreted as unity for all $n\geq n_0$.
	\end{proposition}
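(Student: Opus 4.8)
The plan is to take the Laplace transform of the governing system \eqref{model5} and then unwind the resulting recursion by induction on $m$, matching each term against the recursive definition \eqref{theta} of the sets $\Theta_{m}^{k}$. Applying \eqref{laplace} and the initial conditions to \eqref{model5}: for $n=n_{0}$ the incoming sum is absent because $p^{\alpha}(m,t)\equiv 0$ for $m<n_{0}$, so $\big(s^{\alpha}+\sum_{i=1}^{k}\lambda_{(n_{0})_{i}}\big)\tilde{p}^{\alpha}(n_{0},s)=s^{\alpha-1}$, which is \eqref{bgsda22}. For $n=n_{0}+m$ with $m\ge1$ the initial value is $0$, so \eqref{laplace} yields the recursion
\[
\tilde{p}^{\alpha}(n_{0}+m,s)=\frac{1}{s^{\alpha}+\sum_{i=1}^{k}\lambda_{(n_{0}+m)_{i}}}\sum_{i=1}^{\min\{m,k\}}\lambda_{(n_{0}+m-i)_{i}}\,\tilde{p}^{\alpha}(n_{0}+m-i,s),
\]
the terms with $i>m$ dropping out since $\tilde{p}^{\alpha}(n_{0}+m-i,s)=0$ there.

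I would then prove \eqref{laplacem} by strong induction on $m$. For $m=1$ one has $\Theta_{1}^{k}=\{(1)\}$ and $\Lambda_{1}=\{0,1\}$, so the right-hand side of \eqref{laplacem} collapses to $s^{\alpha-1}\lambda_{(n_{0})_{1}}$ divided by $\big(s^{\alpha}+\sum_{i=1}^{k}\lambda_{(n_{0})_{i}}\big)\big(s^{\alpha}+\sum_{i=1}^{k}\lambda_{(n_{0}+1)_{i}}\big)$, which is exactly the recursion applied to \eqref{bgsda22}. For the inductive step, substitute the induction hypothesis for $\tilde{p}^{\alpha}(n_{0}+m-i,s)$, $1\le i\le\min\{m,k\}$, into the recursion. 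The combinatorial heart is that \eqref{theta} presents every $\bar{y}\in\Theta_{m}^{k}$ uniquely as $\bar{y}=\bar{x}\,Q_{(m-i)\times m}+i\,\bar{e}^{m-i+1}_{m}$, where $i\in\{1,\dots,\min\{m,k\}\}$ is read off as the value of the last nonzero coordinate of $\bar{y}$ and $\bar{x}$ is the tuple of the first $m-i$ coordinates of $\bar{y}$, lying in $\Theta_{m-i}^{k}$ (the degenerate case $i=m$, occurring only when $k\ge m$, producing the extra element $m\,\bar{e}^{1}_{m}$ and corresponding to the term coming from \eqref{bgsda22}). Under this identification the numerator factorizes as $\prod_{j=0}^{m-1}\lambda_{(n_{0}+j)_{y_{j+1}}}=\lambda_{(n_{0}+m-i)_{i}}\prod_{j=0}^{m-i-1}\lambda_{(n_{0}+j)_{x_{j+1}}}$, the trailing zero coordinates contributing factors $\lambda_{(\cdot)_{0}}=1$; and the index sets satisfy $\Lambda_{m}(\bar{y})=\Lambda_{m-i}(\bar{x})\cup\{m\}$, so the denominator of the $\bar{y}$-term equals that of the $\bar{x}$-term times the extra factor $s^{\alpha}+\sum_{i=1}^{k}\lambda_{(n_{0}+m)_{i}}$. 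Hence multiplying the inductive formula for $\tilde{p}^{\alpha}(n_{0}+m-i,s)$ by $\lambda_{(n_{0}+m-i)_{i}}/\big(s^{\alpha}+\sum_{i=1}^{k}\lambda_{(n_{0}+m)_{i}}\big)$ and summing over $i$ reassembles precisely the sum over $\Theta_{m}^{k}$ in \eqref{laplacem}; the case split $1\le k\le m-1$ versus $k\ge m$ only records that in the second regime the superscripts get capped, i.e. $\Theta_{m}^{k}=\Theta_{m}^{m}$.

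I expect the one genuinely fiddly point to be verifying $\Lambda_{m}(\bar{y})=\Lambda_{m-i}(\bar{x})\cup\{m\}$ directly from the definition of $\Lambda_{n}$: one must track which coordinates $y_{j+1}$ vanish, note that the entry $i$ placed at position $m-i+1$ is nonzero so the state $n_{0}+m-i$ is retained, observe that positions $m-i+2,\dots,m$ carry zeros and are therefore excised, and that the index $j=m$ is always kept. Everything else is bookkeeping with finite sums: each $\Theta_{m}^{k}$ is finite, so no convergence issue arises, and the regularity of $t\mapsto p^{\alpha}(n,t)$ needed to invoke \eqref{laplace} is part of its being the state probability of the GFBP.
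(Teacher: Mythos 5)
Your proposal is correct and takes essentially the same route as the paper: Laplace-transform the governing system \eqref{model5}, obtain the recursion $\tilde{p}^{\alpha}(n_{0}+m,s)=\bigl(s^{\alpha}+\sum_{i=1}^{k}\lambda_{(n_{0}+m)_{i}}\bigr)^{-1}\sum_{i=1}^{\min\{m,k\}}\lambda_{(n_{0}+m-i)_{i}}\tilde{p}^{\alpha}(n_{0}+m-i,s)$, and unwind it. The only difference is one of completeness: the paper computes the cases $n=n_{0},n_{0}+1,n_{0}+2,n_{0}+3$ explicitly and concludes ``by iterating this procedure,'' whereas you carry out the inductive step in full by matching the recursion against the recursive definition \eqref{theta} of $\Theta_{m}^{k}$, including the key bookkeeping identities for the numerator factorization and $\Lambda_{m}(\bar{y})=\Lambda_{m-i}(\bar{x})\cup\{m\}$, which is exactly the verification the paper leaves implicit.
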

\begin{proof}
On substituting $n=n_{0}$ in (\ref{model5}), we get
\begin{equation}\label{gfd}
\frac{\mathrm{d}^{\alpha}}{\mathrm{d}t^{\alpha}}p^{\alpha}(n_{0},t)=-\sum_{i=1}^{k}\lambda_{(n_{0})_{i}} p^{\alpha}(n_{0},t),
\end{equation}
where we have used the fact that  $p^{\alpha}(m,t)=0$ for all $m<n_{0}$. On taking the Laplace transform in (\ref{gfd}) and using (\ref{laplace}), we obtain
\begin{equation*}
\tilde{p}^{\alpha}(n_{0},s)=\frac{s^{\alpha-1}}{s^{\alpha}+\sum_{i=1}^{k}\lambda_{(n_{0})_{i}}}.
\end{equation*}
On taking $n=n_{0}+1$ in (\ref{model5}), we get
\begin{equation*}
\frac{\mathrm{d}^{\alpha}}{\mathrm{d}t^{\alpha}}p^{\alpha}(n_{0}+1,t)=-\sum_{i=1}^{k}\lambda_{(n_{0}+1)_{i}} p^{\alpha}(n_{0}+1,t)+\lambda_{(n_{0})_{1}}p^{\alpha}(n_{0},t),
\end{equation*}
which on taking Laplace transform gives
\begin{align*}
\tilde{p}^{\alpha}(n_{0}+1,s)&=\frac{\lambda_{(n_{0})_{1}}\tilde{p}^{\alpha}(n_{0},s)}{s^{\alpha}+\sum_{i=1}^{k}\lambda_{(n_{0}+1)_{i}}}\\
&=\frac{\lambda_{(n_{0})_{1}}s^{\alpha-1}}{\left(s^{\alpha}+\sum_{i=1}^{k}\lambda_{(n_{0})_{i}}\right)\left(s^{\alpha}+\sum_{i=1}^{k}\lambda_{(n_{0}+1)_{i}}\right)}.
\end{align*}
For $n=n_{0}+2$ in (\ref{model5}), we get
\begin{equation*}
\frac{\mathrm{d}^{\alpha}}{\mathrm{d}t^{\alpha}}p^{\alpha}(n_{0}+2,t)=-\sum_{i=1}^{k}\lambda_{(n_{0}+2)_{i}} p^{\alpha}(n_{0}+2,t)+\sum_{i=1}^{k}\lambda_{(n_{0}+2-i)_{i}}p^{\alpha}(n_{0}+2-i,t),
\end{equation*}
which gives
\begin{equation*}
\tilde{p}^{\alpha}(n_{0}+2,s)=\frac{\sum_{i=1}^{k}\lambda_{(n_{0}+2-i)_{i}}\tilde{p}^{\alpha}(n_{0}+2-i,s)}{s^{\alpha}+\sum_{i=1}^{k}\lambda_{(n_{0}+2)_{i}}}.
\end{equation*}
Thus,
\begin{equation*}
\tilde{p}^{\alpha}(n_{0}+2,s)=\begin{cases*}
\displaystyle\frac{\lambda_{(n_{0})_{1}}\lambda_{(n_{0}+1)_{1}}s^{\alpha-1}}{\overset{2}{\underset{j=0}{\prod}}\left(s^{\alpha}+\sum_{i=1}^{k}\lambda_{(n_{0}+j)_{i}}\right)}, \ k=1,\\
\displaystyle\frac{\lambda_{(n_{0})_{1}}\lambda_{(n_{0}+1)_{1}}s^{\alpha-1}}{\overset{2}{\underset{j=0}{\prod}}\left(s^{\alpha}+\sum_{i=1}^{k}\lambda_{(n_{0}+j)_{i}}\right)}+\frac{\lambda_{(n_{0})_{2}}s^{\alpha-1}}{\underset{j\neq1}{\overset{2}{\underset{j=0}{\prod}}}\left(s^{\alpha}+\sum_{i=1}^{k}\lambda_{(n_{0}+j)_{i}}\right)}, \ k\ge2.
\end{cases*}
\end{equation*}
Similarly, we get the following expressions in the case of $n=n_{0}+3$: 
\begin{equation*}
\tilde{p}^{\alpha}(n_{0}+3,s)=\frac{\sum_{i=1}^{k}\lambda_{(n_{0}+3-i)_{i}}\tilde{p}^{\alpha}(n_{0}+3-i,s)}{s^{\alpha}+\sum_{i=1}^{k}\lambda_{(n_{0}+3)_{i}}}.
\end{equation*}
For $k=1$, we get
\begin{equation*}
\tilde{p}^{\alpha}(n_{0}+3,s)=\frac{\lambda_{(n_{0})_{1}}\lambda_{(n_{0}+1)_{1}}\lambda_{(n_{0}+2)_{1}}s^{\alpha-1}}{\overset{3}{\underset{j=0}{\prod}}\left(s^{\alpha}+\sum_{i=1}^{k}\lambda_{(n_{0}+j)_{i}}\right)}.
\end{equation*}
For $k=2$, we get
\begin{align*}
\tilde{p}^{\alpha}(n_{0}+3,s)&=\frac{\lambda_{(n_{0})_{1}}\lambda_{(n_{0}+1)_{1}}\lambda_{(n_{0}+2)_{1}}s^{\alpha-1}}{\overset{3}{\underset{j=0}{\prod}}\left(s^{\alpha}+\sum_{i=1}^{k}\lambda_{(n_{0}+j)_{i}}\right)}+\frac{\lambda_{(n_{0})_{2}}\lambda_{(n_{0}+2)_{1}}s^{\alpha-1}}{\underset{j\neq1}{\overset{3}{\underset{j=0}{\prod}}}\left(s^{\alpha}+\sum_{i=1}^{k}\lambda_{(n_{0}+j)_{i}}\right)}\\
&\ \ +\frac{\lambda_{(n_{0})_{1}}\lambda_{(n_{0}+1)_{2}}s^{\alpha-1}}{\underset{j\neq2}{\overset{3}{\underset{j=0}{\prod}}}\left(s^{\alpha}+\sum_{i=1}^{k}\lambda_{(n_{0}+j)_{i}}\right)}.
\end{align*}
For $k\ge3$, we get
\begin{align*}
\tilde{p}^{\alpha}(n_{0}+3,s)&=\frac{\lambda_{(n_{0})_{1}}\lambda_{(n_{0}+1)_{1}}\lambda_{(n_{0}+2)_{1}}s^{\alpha-1}}{\overset{3}{\underset{j=0}{\prod}}\left(s^{\alpha}+\sum_{i=1}^{k}\lambda_{(n_{0}+j)_{i}}\right)}+\frac{\lambda_{(n_{0})_{2}}\lambda_{(n_{0}+2)_{1}}s^{\alpha-1}}{\underset{j\neq1}{\overset{3}{\underset{j=0}{\prod}}}\left(s^{\alpha}+\sum_{i=1}^{k}\lambda_{(n_{0}+j)_{i}}\right)}\\
&\ \ +\frac{\lambda_{(n_{0})_{1}}\lambda_{(n_{0}+1)_{2}}s^{\alpha-1}}{\underset{j\neq2}{\overset{3}{\underset{j=0}{\prod}}}\left(s^{\alpha}+\sum_{i=1}^{k}\lambda_{(n_{0}+j)_{i}}\right)}+\frac{\lambda_{(n_{0})_{3}}s^{\alpha-1}}{\underset{j\neq1,2}{\overset{3}{\underset{j=0}{\prod}}}\left(s^{\alpha}+\sum_{i=1}^{k}\lambda_{(n_{0}+j)_{i}}\right)}.
\end{align*}
Thus,
\begin{equation*}
\tilde{p}^{\alpha}(n_{0}+3,s)=\begin{cases*}
\displaystyle\sum_{\Theta_{3}^{k}}\frac{s^{\alpha-1}\prod_{j=0}^{2}\lambda_{(n_{0}+j)_{x_{j+1}}}}{\underset{j\in \Lambda_{3}}{\prod}\left(s^{\alpha}+\sum_{i=1}^{k}\lambda_{(n_{0}+j)_{i}}\right)},\ \ 1\le k\le 2,\\
\displaystyle\sum_{\Theta_{3}^{3}}\frac{s^{\alpha-1}\prod_{j=0}^{2}\lambda_{(n_{0}+j)_{x_{j+1}}}}{\underset{j\in \Lambda_{3}}{\prod}\left(s^{\alpha}+\sum_{i=1}^{k}\lambda_{(n_{0}+j)_{i}}\right)},\ \  k\ge 3.
\end{cases*}
\end{equation*}
By iterating this procedure, we get the required result.
\end{proof}

The next result follows by taking $\nu_{n}=\alpha$ for all $n\ge 1$ in Eq. (9) and Eq. (39) of Kataria and Vellaisamy (2019).
\begin{proposition}\label{prop3}
	Let $\lambda_{j}$, $j=1,2,\dots,n$ be any positive real numbers. Then,
	\begin{equation*}
	\mathcal{L}^{-1}\left(\frac{s^{\alpha-1}}{\prod_{j=1}^{n}(s^{\alpha}+\lambda_{j})};t\right)=\frac{(-1)^{n-1}}{\overset{n}{\underset{j=2}{\prod}}\lambda_{j}}\sum_{i=n-1}^{\infty}\frac{(-t^{\alpha })^{i}}{\Gamma(i\alpha +1)}\sum_{\Omega^{i}_{n}}\prod_{j=1}^{n}\lambda_{j}^{y_{j}},
	\end{equation*}	
	where $\Omega^{i}_{n}=\{(y_{1},y_{2},\dots,y_{n}):\sum_{j=1}^{n}y_{j}=i, \ y_{1}\in \mathbb{N}_{0}, \ y_{j}\in \mathbb{N}_{0}\setminus\{0\}, \ 2\le j \le n\}$.
\end{proposition}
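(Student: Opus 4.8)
The plan is to invert the transform directly, by expanding the rational function in powers of $s^{-\alpha}$ and reading off the series. For $|s|$ large enough that $\lambda_{j}|s|^{-\alpha}<1$ for each $j$, one writes
\[
\frac{s^{\alpha-1}}{\prod_{j=1}^{n}(s^{\alpha}+\lambda_{j})}
=s^{\alpha-1-n\alpha}\prod_{j=1}^{n}\frac{1}{1+\lambda_{j}s^{-\alpha}}
=s^{\alpha-1-n\alpha}\prod_{j=1}^{n}\sum_{y_{j}\ge0}(-\lambda_{j})^{y_{j}}s^{-\alpha y_{j}} .
\]
Multiplying out the $n$ absolutely convergent geometric series and grouping the terms by the value of $i=y_{1}+\dots+y_{n}$ gives
\[
\frac{s^{\alpha-1}}{\prod_{j=1}^{n}(s^{\alpha}+\lambda_{j})}
=\sum_{i=0}^{\infty}(-1)^{i}\,s^{-(i+n-1)\alpha-1}\sum_{\substack{y_{1}+\dots+y_{n}=i\\ y_{j}\ge0}}\prod_{j=1}^{n}\lambda_{j}^{y_{j}} .
\]

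Next I would invert term by term, using $\mathcal{L}^{-1}(s^{-\beta-1};t)=t^{\beta}/\Gamma(\beta+1)$ with $\beta=(i+n-1)\alpha$, to obtain
\[
\mathcal{L}^{-1}\!\left(\frac{s^{\alpha-1}}{\prod_{j=1}^{n}(s^{\alpha}+\lambda_{j})};t\right)
=\sum_{i=0}^{\infty}(-1)^{i}\,\frac{t^{(i+n-1)\alpha}}{\Gamma((i+n-1)\alpha+1)}\sum_{\substack{y_{1}+\dots+y_{n}=i\\ y_{j}\ge0}}\prod_{j=1}^{n}\lambda_{j}^{y_{j}} .
\]
To bring this into the stated shape, reindex the outer sum by $i\mapsto i-n+1$ (so the index now starts at $n-1$) and, in the inner sum, apply the shift $z_{1}=y_{1}$, $z_{j}=y_{j}+1$ for $2\le j\le n$. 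This shift turns the constraint $\{y_{j}\ge0,\ \sum_{j}y_{j}=i-n+1\}$ into $\{z_{1}\ge0,\ z_{2},\dots,z_{n}\ge1,\ \sum_{j}z_{j}=i\}$, which is precisely $\Omega_{n}^{i}$, and it converts $\prod_{j=1}^{n}\lambda_{j}^{y_{j}}$ into $\bigl(\prod_{j=2}^{n}\lambda_{j}\bigr)^{-1}\prod_{j=1}^{n}\lambda_{j}^{z_{j}}$; since $(-1)^{i-n+1}=(-1)^{n-1}(-1)^{i}$ and $(-1)^{i}t^{i\alpha}=(-t^{\alpha})^{i}$, the claimed identity falls out.

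The step I expect to require the most care is the legitimacy of the termwise inversion, and I would handle it by running the argument in reverse. For fixed $\lambda_{j}$ the inner sum over $\Omega_{n}^{i}$ is $O(i^{n-1}M^{i})$ with $M=\max_{j}\lambda_{j}$, which is dominated by the factorial-type growth of $\Gamma((i+n-1)\alpha+1)$; hence the series on the right-hand side converges locally uniformly in $t$ and defines a continuous function of subexponential order, whose Laplace transform may be computed term by term. By the first display this transform equals $s^{\alpha-1}/\prod_{j=1}^{n}(s^{\alpha}+\lambda_{j})$ for all large $s$, so the uniqueness theorem for Laplace transforms completes the proof. The remaining ingredients — the geometric expansion and the combinatorial bookkeeping in the shift — are entirely routine. (An alternative is induction on $n$, peeling off one factor $(s^{\alpha}+\lambda_{n})^{-1}$ at a time via a fractional convolution identity, but the direct expansion above seems cleaner.)
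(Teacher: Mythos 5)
Your argument is correct, and it is genuinely different from what the paper does: the paper offers no proof at all for Proposition \ref{prop3}, simply citing Eq.~(9) and Eq.~(39) of Kataria and Vellaisamy (2019) (the variable-order result reproduced later as (\ref{result})) and specializing $\nu_{n}=\alpha$ for all $n$. Your direct route --- factoring out $s^{n\alpha}$, expanding each $(1+\lambda_{j}s^{-\alpha})^{-1}$ as a geometric series for $s^{\alpha}>\max_{j}\lambda_{j}$, collecting terms by $i=\sum_{j}y_{j}$, inverting $s^{-(i+n-1)\alpha-1}$ termwise, and then performing the shift $z_{1}=y_{1}$, $z_{j}=y_{j}+1$ that produces the set $\Omega_{n}^{i}$ and the prefactor $(-1)^{n-1}/\prod_{j=2}^{n}\lambda_{j}$ --- checks out line by line, including the sign bookkeeping $(-1)^{i-n+1}=(-1)^{n-1}(-1)^{i}$, and your reverse-direction justification (bound the inner sum by $O(i^{n-1}M^{i})$, use the $\Gamma((i+n-1)\alpha+1)$ growth to get local uniform convergence, transform the series term by term via Tonelli for large $s$, and conclude by uniqueness of the Laplace transform) is sound. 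What your approach buys is a self-contained proof that makes the combinatorial meaning of $\Omega_{n}^{i}$ transparent and that, with $s^{-\alpha y_{j}}$ replaced by $s^{-\alpha_{j}y_{j}}$, would in fact also yield the more general variable-order identity (\ref{result}) the paper imports; what the citation buys is brevity. One small wording correction: the series you construct is of \emph{exponential} order (its sum behaves like a Mittag-Leffler function, roughly $\exp(M^{1/\alpha}t)$ up to polynomial factors), not subexponential; this is harmless, since the Laplace transform still exists for all sufficiently large $s$ and Lerch's uniqueness theorem applies on that half-line.
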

In case $\lambda_{j}$'s are distinct, the above result has a simplified form. The following result is proved using method of induction by Alipour {\it et al.} (2015). Here, we give an alternate proof.
\begin{lemma}\label{lemma}
Let \begin{equation*}
p(x)=\sum_{i=1}^{n}\underset{j\neq i}{\overset{n}{\underset{j=1}{\prod}}}\frac{x+\lambda_{j}}{\lambda_{j}-\lambda_{i}}-1, \ \ n>1, 
\end{equation*}
where $x\in \mathbb{R}$ and  $\lambda_{1},\lambda_{2},\dots,\lambda_{n}$ be distinct	real numbers. Then, $p(x)\equiv0$.
\end{lemma}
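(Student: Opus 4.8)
The plan is to recognize that $p$ is a polynomial in $x$ of degree at most $n-1$ and then to produce $n$ distinct roots, which forces $p\equiv 0$. This sidesteps the induction used by Alipour \emph{et al.} (2015) entirely.

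First I would observe that, since $\lambda_1,\dots,\lambda_n$ are distinct, each denominator $\prod_{j\neq i}(\lambda_j-\lambda_i)$ is a nonzero constant, so for every fixed $i$ the term $\prod_{j\neq i}\frac{x+\lambda_j}{\lambda_j-\lambda_i}$ is a polynomial in $x$ of degree exactly $n-1$. Consequently $p(x)$ is a polynomial of degree at most $n-1$.

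Next I would evaluate $p$ at the $n$ distinct points $x=-\lambda_k$, $k=1,2,\dots,n$. Fix $k$. For any index $i\neq k$, the numerator $\prod_{j\neq i}(x+\lambda_j)$ contains the factor $(x+\lambda_k)$, which vanishes at $x=-\lambda_k$; hence every summand with $i\neq k$ contributes $0$. The lone surviving summand, namely $i=k$, equals $\prod_{j\neq k}\frac{-\lambda_k+\lambda_j}{\lambda_j-\lambda_k}=\prod_{j\neq k}1=1$. Therefore $p(-\lambda_k)=1-1=0$ for every $k$.

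Finally, a nonzero polynomial of degree at most $n-1$ has at most $n-1$ roots; since $p$ vanishes at the $n$ distinct points $-\lambda_1,\dots,-\lambda_n$, it is the zero polynomial, i.e.\ $p(x)\equiv 0$. (Equivalently, one may note that $\sum_{i=1}^{n}\prod_{j\neq i}\frac{x+\lambda_j}{\lambda_j-\lambda_i}$ is the Lagrange interpolating polynomial of the constant function $1$ at the nodes $-\lambda_i$, and so by uniqueness of interpolation it must equal $1$.) There is no real obstacle here; the only point needing a moment's care is the bookkeeping showing the $i=k$ term evaluates to exactly $1$ while all other terms drop out, which relies on the distinctness of the $\lambda_j$.
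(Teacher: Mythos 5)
Your proof is correct and follows essentially the same route as the paper: both observe that $p$ is a polynomial of degree at most $n-1$ vanishing at the $n$ distinct points $-\lambda_1,\dots,-\lambda_n$, hence identically zero. You simply spell out the evaluation $p(-\lambda_k)=0$ in more detail than the paper does, which is fine.
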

\begin{proof}
Note that $p(-\lambda_{j})=0$ for $j=1,2,\dots,n$, that is, $p(x)$ is a polynomial of degree $n-1$ with $n$ distinct roots. This implies that it is a zero polynomial. This proves the result.	
\end{proof}	

The following result will be used (see Kilbas {\it et al.} (2006)):
\begin{equation}\label{mllaplace}
\int_{0}^{\infty}e^{-st}E_{\alpha,1}(-\omega t^{\alpha})\,\mathrm{d}t=\frac{s^{\alpha-1}}{s^{\alpha}+\omega},
\end{equation}
where $E_{\alpha,1}(\cdot)$ is the Mittag-Leffler function defined as
\begin{equation*}
E_{\alpha,1}(x)\coloneqq\sum_{j=0}^{\infty} \frac{x^{j}}{\Gamma(j\alpha+1)},\ \ x\in\mathbb{R},\ \ \alpha>0.
\end{equation*}
\begin{proposition}\label{prop2}
Let $\lambda_{1},\lambda_{2},\dots,\lambda_{n}$ be distinct	positive real numbers. Then,
\begin{equation*}
\mathcal{L}^{-1}\left(\frac{s^{\alpha-1}}{\prod_{j=1}^{n}(s^{\alpha}+\lambda_{j})};t\right)=\sum_{i=1}^{n}\frac{E_{\alpha,1}(-\lambda_{i}t^{\alpha})}{\overset{n}{\underset{j=1, j\neq i}{\prod}}(\lambda_{j}-\lambda_{i})},\ \ n>1.
\end{equation*}	

\end{proposition}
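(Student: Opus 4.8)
The plan is to reduce everything to the single-term inversion formula $(\ref{mllaplace})$ by carrying out the classical simple-pole partial fraction expansion of $1/\prod_{j=1}^{n}(x+\lambda_{j})$ in the auxiliary variable $x=s^{\alpha}$. The decomposition I need is exactly what Lemma $\ref{lemma}$ delivers: from $p(x)\equiv0$ we have $\sum_{i=1}^{n}\prod_{j\neq i}(x+\lambda_{j})/(\lambda_{j}-\lambda_{i})=1$ for every $x$, and dividing this identity through by $\prod_{j=1}^{n}(x+\lambda_{j})=(x+\lambda_{i})\prod_{j\neq i}(x+\lambda_{j})$ gives
\[
\frac{1}{\prod_{j=1}^{n}(x+\lambda_{j})}=\sum_{i=1}^{n}\frac{1}{(x+\lambda_{i})\,\prod_{j=1,\,j\neq i}^{n}(\lambda_{j}-\lambda_{i})},
\]
valid for all $x$ outside $\{-\lambda_{1},\dots,-\lambda_{n}\}$; the distinctness of the $\lambda_{j}$'s is precisely what makes all the poles simple and the coefficients well defined.

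Next I would substitute $x=s^{\alpha}$ and multiply by $s^{\alpha-1}$ to obtain
\[
\frac{s^{\alpha-1}}{\prod_{j=1}^{n}(s^{\alpha}+\lambda_{j})}=\sum_{i=1}^{n}\frac{1}{\prod_{j=1,\,j\neq i}^{n}(\lambda_{j}-\lambda_{i})}\cdot\frac{s^{\alpha-1}}{s^{\alpha}+\lambda_{i}}.
\]
Since the right-hand side is a finite linear combination, the inverse Laplace transform acts term by term by linearity of $\mathcal{L}^{-1}$; applying $(\ref{mllaplace})$ with $\omega=\lambda_{i}>0$ to each summand gives $\mathcal{L}^{-1}\!\left(s^{\alpha-1}/(s^{\alpha}+\lambda_{i});t\right)=E_{\alpha,1}(-\lambda_{i}t^{\alpha})$, and the claimed identity follows at once.

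There is no substantial obstacle here: the only points deserving a word are (i) that the first display is the standard partial fraction expansion, whose correctness is guaranteed by Lemma $\ref{lemma}$ under the distinctness hypothesis, and (ii) that termwise inversion is legitimate, which is immediate for a finite sum. The positivity of the $\lambda_{i}$'s is used only to ensure that each factor $s^{\alpha-1}/(s^{\alpha}+\lambda_{i})$ is genuinely the Laplace transform appearing in $(\ref{mllaplace})$. The whole argument is short and self-contained once Lemma $\ref{lemma}$ is in hand.
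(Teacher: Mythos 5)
Your proof is correct and follows essentially the same route as the paper: Lemma \ref{lemma} yields the partial fraction decomposition of $s^{\alpha-1}/\prod_{j=1}^{n}(s^{\alpha}+\lambda_{j})$, and termwise application of (\ref{mllaplace}) finishes the inversion. You merely spell out the derivation of the decomposition from $p(x)\equiv0$ a bit more explicitly than the paper does.
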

\begin{proof}
Using Lemma \ref{lemma}, we get
\begin{align*}
\mathcal{L}^{-1}\left(\frac{s^{\alpha-1}}{\prod_{j=1}^{n}(s^{\alpha}+\lambda_{j})};t\right)&=\mathcal{L}^{-1}\left(\sum_{i=1}^{n}\frac{s^{\alpha-1}}{s^{\alpha}+\lambda_{i}}\underset{j\neq i}{\overset{n}{\underset{j=1}{\prod}}}\frac{1}{(\lambda_{j}-\lambda_{i})};t\right)\\
&=\sum_{i=1}^{n}\frac{E_{\alpha,1}(-\lambda_{i}t^{\alpha})}{\overset{n}{\underset{j=1, j\neq i}{\prod}}(\lambda_{j}-\lambda_{i})},
\end{align*}
where the last step follows from (\ref{mllaplace}). 
\end{proof}

\begin{theorem}\label{main}
	For $n=n_{0}$, the state probability of GFBP is given by
	 \begin{equation*}
	 p^{\alpha}(n_{0},t)=E_{\alpha,1}\left(-\sum_{i=1}^{k}\lambda_{(n_{0})_{i}}t^{\alpha}\right)
	 \end{equation*}
and for $n=n_0+m$, $m\ge 1$, it is given by
	\begin{equation}\label{pnta}
	p^{\alpha}(n_{0}+m,t)=\begin{cases*}
	\displaystyle\sum_{\Theta_{m}^{k}}\prod_{j=0}^{m-1}\lambda_{(n_{0}+j)_{x_{j+1}}}\frac{(-1)^{m^{*}-1}}{\overset{m^{*}}{\underset{l=2}{\prod}}\mu_{l}}\sum_{i=m^{*}-1}^{\infty}\frac{(-t^{\alpha })^{i}}{\Gamma(i\alpha +1)}\sum_{\Omega^{i}_{m^{*}}}\prod_{l=1}^{m^{*}}\mu_{l}^{y_{l}},\ \ 1\le k\le m-1,\\
	\displaystyle\sum_{\Theta_{m}^{m}}\prod_{j=0}^{m-1}\lambda_{(n_{0}+j)_{x_{j+1}}}\frac{(-1)^{m^{*}-1}}{\overset{m^{*}}{\underset{l=2}{\prod}}\mu_{l}}\sum_{i=m^{*}-1}^{\infty}\frac{(-t^{\alpha })^{i}}{\Gamma(i\alpha +1)}\sum_{\Omega^{i}_{m^{*}}}\prod_{l=1}^{m^{*}}\mu_{l}^{y_{l}},\ \  k\ge m,
	\end{cases*}
	\end{equation}
	where $(x_{1},x_{2},\dots,x_{m})\in \Theta_{m}^{k}$, $(y_{1},y_{2},\dots,y_{m^{*}})\in \Omega^{i}_{m^{*}}$, $m^{*}=|\Lambda_{m}|$ and $\mu_{l}=\sum_{i=1}^{k}\lambda_{(n_{0}+(j)_{l})_{i}}$,  $(j)_{l}$'s are given by (\ref{(j)}). 
	
	If $\mu_{l}$, $l=1,2,\dots,m^{*}$ are distinct then
	\begin{equation}\label{pnt}
	p^{\alpha}(n_{0}+m,t)=\begin{cases*}
	\displaystyle\sum_{\Theta_{m}^{k}}\prod_{j=0}^{m-1}\lambda_{(n_{0}+j)_{x_{j+1}}}\sum_{i=1}^{m^{*}}\frac{E_{\alpha,1}(-\mu_{i}t^{\alpha})}{\overset{m^{*}}{\underset{l=1, l\neq i}{\prod}}(\mu_{l}-\mu_{i})},\ \ 1\le k\le m-1,\\
	\displaystyle\sum_{\Theta_{m}^{m}}\prod_{j=0}^{m-1}\lambda_{(n_{0}+j)_{x_{j+1}}}\sum_{i=1}^{m^{*}}\frac{E_{\alpha,1}(-\mu_{i}t^{\alpha})}{\overset{m^{*}}{\underset{l=1, l\neq i}{\prod}}(\mu_{l}-\mu_{i})},\ \  k\ge m.
	\end{cases*}
	\end{equation}
\end{theorem}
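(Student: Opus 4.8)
The plan is to obtain $p^{\alpha}(n_{0}+m,t)$ by inverting, term by term, the Laplace transforms produced in Proposition \ref{firstp}. For $n=n_{0}$ the result is immediate: the expression (\ref{bgsda22}) is exactly of the form appearing in (\ref{mllaplace}) with $\omega=\sum_{i=1}^{k}\lambda_{(n_{0})_{i}}$, so $p^{\alpha}(n_{0},t)=E_{\alpha,1}\left(-\sum_{i=1}^{k}\lambda_{(n_{0})_{i}}t^{\alpha}\right)$.

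For $n=n_{0}+m$ with $m\ge 1$, I would start from (\ref{laplacem}), which writes $\tilde{p}^{\alpha}(n_{0}+m,s)$ as a finite sum over $\Theta_{m}^{k}$ of terms of the shape
\[
\frac{s^{\alpha-1}\prod_{j=0}^{m-1}\lambda_{(n_{0}+j)_{x_{j+1}}}}{\prod_{j\in\Lambda_{m}}\left(s^{\alpha}+\sum_{i=1}^{k}\lambda_{(n_{0}+j)_{i}}\right)}.
\]
The key point is that, for a fixed tuple $(x_{1},\dots,x_{m})\in\Theta_{m}^{k}$, the denominator is a product of exactly $m^{*}=|\Lambda_{m}|$ factors, each of the form $s^{\alpha}+\mu_{l}$ with $\mu_{l}=\sum_{i=1}^{k}\lambda_{(n_{0}+(j)_{l})_{i}}$ and the indices $(j)_{l}$ as in (\ref{(j)}). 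Thus each summand is, up to the scalar $\prod_{j=0}^{m-1}\lambda_{(n_{0}+j)_{x_{j+1}}}$, precisely of the form $s^{\alpha-1}/\prod_{l=1}^{m^{*}}(s^{\alpha}+\mu_{l})$, which is the object inverted in Proposition \ref{prop3}. Applying that proposition with $n$ replaced by $m^{*}$ and $\lambda_{l}=\mu_{l}$, and using linearity of the inverse Laplace transform over the finite index set $\Theta_{m}^{k}$, yields (\ref{pnta}). For the second assertion I would instead invoke Proposition \ref{prop2} on each summand: when the $\mu_{l}$, $l=1,\dots,m^{*}$, are distinct, $\mathcal{L}^{-1}\left(s^{\alpha-1}/\prod_{l=1}^{m^{*}}(s^{\alpha}+\mu_{l});t\right)=\sum_{i=1}^{m^{*}}E_{\alpha,1}(-\mu_{i}t^{\alpha})/\prod_{l\ne i}(\mu_{l}-\mu_{i})$, and summing over $\Theta_{m}^{k}$ gives (\ref{pnt}).

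The only genuinely delicate point I anticipate is the bookkeeping behind the previous paragraph: one must check that, once the convention $\lambda_{(n)_{0}}=1$ is used to discard the zero-jump coordinates, the denominator in (\ref{laplacem}) really has exactly $m^{*}$ factors and that these are enumerated correctly by the indices $(j)_{l}$ of (\ref{(j)}), so that Propositions \ref{prop3} and \ref{prop2} apply with $n$ replaced by $m^{*}$. This amounts to a careful reading of the definitions of $\Theta_{m}^{k}$, $\Lambda_{m}$ and $(j)_{l}$ together with the recursion used in the proof of Proposition \ref{firstp}; once that is in place, the analytic content is routine, since the sum over $\Theta_{m}^{k}$ is finite and each term is $s^{\alpha-1}$ times a proper rational function of $s^{\alpha}$.
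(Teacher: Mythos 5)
Your proposal is correct and follows essentially the same route as the paper: the paper likewise rewrites each summand of (\ref{laplacem}) so that its denominator is $\prod_{l=1}^{m^{*}}(s^{\alpha}+\mu_{l})$ and then inverts term by term using (\ref{mllaplace}) for $n=n_{0}$, Proposition \ref{prop3} for (\ref{pnta}), and Proposition \ref{prop2} for (\ref{pnt}). The bookkeeping step you flag (identifying exactly $m^{*}$ factors indexed by $(j)_{l}$) is precisely the implicit rewriting done in the paper's proof, so no essential difference remains.
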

\begin{proof}
	For the case $n=n_{0}$, the result follows by taking inverse Laplace transform in (\ref{bgsda22}) and then by using (\ref{mllaplace}). For $n>n_0$, the Laplace transform (\ref{laplacem}) of GFBP can be rewritten as
\begin{equation*}
\tilde{p}^{\alpha}(n_{0}+m,s)=\begin{cases*}
\displaystyle\sum_{\Theta_{m}^{k}}\prod_{j=0}^{m-1}\lambda_{(n_{0}+j)_{x_{j+1}}}\frac{s^{\alpha-1}}{\overset{m^{*}}{\underset{l=1}{\prod}}\left(s^{\alpha}+\mu_{l}\right)},\ \ 1\le k\le m-1,\\
\displaystyle\sum_{\Theta_{m}^{m}}\prod_{j=0}^{m-1}\lambda_{(n_{0}+j)_{x_{j+1}}}\frac{s^{\alpha-1}}{\overset{m^{*}}{\underset{l=1}{\prod}}\left(s^{\alpha}+\mu_{l}\right)},\ \  k\ge m.
\end{cases*}
\end{equation*}

The result follows on taking inverse Laplace transform in the above equation and using Proposition \ref{prop3} and Proposition \ref{prop2}, respectively.
\end{proof}
\begin{remark}
	Let $W_{1}^{\alpha}$ denote the first waiting time of GFBP. Its distribution is given by
	\begin{equation*}
	\mathrm{Pr}\{W_{1}^{\alpha}>t\}=\mathrm{Pr}\{\mathcal{X}^{\alpha}(t)=n_{0}\}=E_{\alpha,1}\left(-\sum_{i=1}^{k}\lambda_{(n_{0})_{i}}t^{\alpha}\right).
	\end{equation*}
\end{remark}
\begin{remark}
Recall that the TFPP is obtained as a particular case of the GFBP
when $n_{0}=0$, $k=1$ and $\lambda_{(n)_{1}}=\lambda$ for all $n\ge0$. In this case, we have $\Theta_{m}^{1}=\{(1,1,\dots,1)\}$, $m^{*}=|\mathbb{N}^{m}_{0}|=m+1$, $|\Omega^{i}_{m+1}|=\frac{i!}{m!(i-m)!}$ and $\mu_{l}=\lambda_{((j)_{l})_{1}}=\lambda$ as  $(j)_{l}=l-1$, for all $1\le l \le m+1$. On substituting these values in (\ref{pnta}), we get
	\begin{align*}
	p^{\alpha}(m,t)&=\sum_{\Theta_{m}^{1}}\prod_{j=0}^{m-1}\lambda_{(j)_{1}}\frac{(-1)^{m}}{\overset{m+1}{\underset{l=2}{\prod}}\mu_{l}}\sum_{i=m}^{\infty}\frac{(-t^{\alpha })^{i}}{\Gamma(i\alpha +1)}\sum_{\Omega^{i}_{m+1}}\prod_{l=1}^{m+1}\mu_{l}^{y_{l}}\\
	&=(-1)^{m}\sum_{i=m}^{\infty}\frac{(-\lambda t^{\alpha })^{i}}{\Gamma(i\alpha +1)}\sum_{\Omega^{i}_{m+1}}1\\
	&=(-1)^{m}\sum_{i=m}^{\infty}\frac{(-\lambda t^{\alpha })^{i}}{\Gamma(i\alpha +1)}\frac{i!}{m!(i-m)!}\\
	&=\frac{(\lambda t^{\alpha})^{m}}{m!}\sum_{i=0}^{\infty}\frac{(i+m)!}{i!}\frac{(-\lambda t^{\alpha })^{i}}{\Gamma((i+m)\alpha +1)},
	\end{align*}
	which agrees with the probability mass function (pmf) of TFPP (see Beghin and Orsingher (2009), Eq. (2.10)).
\end{remark}
\begin{remark}
Recall that the FPBP is obtained as a particular case of the GFBP
when $n_{0}=k=1$ and $\lambda_{(n)_{1}}$'s are distinct. For notational convenience, we write $\lambda_{(n)_{1}}=\lambda_{n}$ for all $n\ge1$. In this case, we have $\Theta_{m}^{1}=\{(1,1,\dots,1)\}$, $m^{*}=|\mathbb{N}^{m}_{0}|=m+1$  and $\mu_{l}=\lambda_{(1+(j)_{l})_{1}}=\lambda_{l}$ as  $(j)_{l}=l-1$, for all $1\le l \le m+1$.  On substituting these values in (\ref{pnt}), we get
		\begin{align*}
	p^{\alpha}(m+1,t)&=\sum_{\Theta_{m}^{1}}\prod_{j=0}^{m-1}\lambda_{j+1}\sum_{i=1}^{m+1}\frac{E_{\alpha,1}(-\lambda_{i}t^{\alpha})}{\overset{m+1}{\underset{l=1, l\neq i}{\prod}}(\lambda_{l}-\lambda_{i})}\\
	&=\prod_{j=1}^{m}\lambda_{j}\sum_{i=1}^{m+1}\frac{E_{\alpha,1}(-\lambda_{i}t^{\alpha})}{\overset{m+1}{\underset{l=1, l\neq i}{\prod}}(\lambda_{l}-\lambda_{i})},
		\end{align*}
		which agrees with the pmf of FPBP (see Orsingher and Polito (2010), Eq. (2.3)).
\end{remark}

Let $f_{T_{2\alpha}}(x,t)$ be the folded solution of the following fractional diffusion equation: 
\begin{equation}\label{diff}
\frac{\mathrm{d}^{2\alpha}}{\mathrm{d}t^{2\alpha}}u(x,t)=\frac{\partial^{2}}{\partial x^{2}}u(x,t),\ \ x\in\mathbb{R},\ t>0,
\end{equation}
with $u(x,0)=\delta(x)$ for $0<\alpha\le 1$ and $\frac{\partial^{}}{\partial t}u(x,0)=0$ for $1/2<\alpha\le1$.

Its Laplace transform is given by (see Orsingher and Polito (2010), Eq. (2.29))
\begin{equation}\label{lta}
\int_{0}^{\infty}e^{-st}f_{T_{2\alpha}}(x,t)\mathrm{d}t=s^{\alpha-1}e^{-x s^{\alpha}}, \ \ x>0.
\end{equation}
Let $\{T_{2\alpha}(t)\}_{t>0}$ denote a random process whose one-dimensional distribution is given by $f_{T_{2\alpha}}(x,t)$. 	

\begin{theorem}\label{subordination}
	Let $\mu_{l}$'s are as defined in Theorem \ref{main} and  $\{T_{2\alpha}(t)\}_{t>0}$, $0<\alpha\le 1$, be a process whose distribution solves (\ref{diff}). If $\mu_{l}$'s are distinct then the following holds for GFBP: 
	\begin{equation}\label{sub}
	\mathcal{X}^{\alpha}(t)\overset{d}{=}\mathcal{X}(T_{2\alpha}(t)),
	\end{equation}
	where $\{T_{2\alpha}(t)\}_{t>0}$ is  independent of the GBP $\{\mathcal{X}(t)\}_{t>0}$.
\end{theorem}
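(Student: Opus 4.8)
The plan is to establish \eqref{sub} by comparing the one-dimensional distributions of the two processes via Laplace transforms in the time variable. Write $h^{\alpha}(n,t)=\mathrm{Pr}\{\mathcal{X}(T_{2\alpha}(t))=n\}$. Since $\{T_{2\alpha}(t)\}_{t>0}$ is independent of the GBP, conditioning on its value gives
\begin{equation*}
h^{\alpha}(n,t)=\int_{0}^{\infty}p(n,x)\,f_{T_{2\alpha}}(x,t)\,\mathrm{d}x,\ \ n\ge n_{0}.
\end{equation*}
Taking the Laplace transform with respect to $t$ and interchanging the order of integration (justified by nonnegativity via Tonelli), together with the transform \eqref{lta}, yields
\begin{equation*}
\tilde{h}^{\alpha}(n,s)=\int_{0}^{\infty}p(n,x)\,s^{\alpha-1}e^{-x s^{\alpha}}\,\mathrm{d}x=s^{\alpha-1}\,\tilde{p}(n,s^{\alpha}),
\end{equation*}
where $\tilde{p}(n,\cdot)$ is the Laplace transform of the state probabilities of the GBP $\mathcal{X}$. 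The goal is then to show $\tilde{h}^{\alpha}(n,s)=\tilde{p}^{\alpha}(n,s)$ for all $n\ge n_{0}$, after which uniqueness of the Laplace transform gives $h^{\alpha}(n,t)=p^{\alpha}(n,t)$ and hence \eqref{sub}.

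Next I would compute $\tilde{p}(n,s^{\alpha})$ explicitly. The GBP is the $\alpha=1$ case of the GFBP, so Proposition \ref{firstp} with $\alpha=1$ gives $\tilde{p}(n_{0},s)=(s+\sum_{i=1}^{k}\lambda_{(n_{0})_{i}})^{-1}$ and, for $n=n_{0}+m$,
\begin{equation*}
\tilde{p}(n_{0}+m,s)=\sum_{\Theta_{m}^{k}}\frac{\prod_{j=0}^{m-1}\lambda_{(n_{0}+j)_{x_{j+1}}}}{\prod_{l=1}^{m^{*}}\left(s+\mu_{l}\right)},
\end{equation*}
using the rewriting of the denominator in terms of $\mu_{1},\dots,\mu_{m^{*}}$ from the proof of Theorem \ref{main} (and similarly for $k\ge m$ with $\Theta_{m}^{m}$). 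Replacing $s$ by $s^{\alpha}$ and multiplying by $s^{\alpha-1}$ gives precisely the expression for $\tilde{p}^{\alpha}(n_{0}+m,s)$ displayed in the proof of Theorem \ref{main}; for $n=n_{0}$ one gets $s^{\alpha-1}(s^{\alpha}+\sum_{i=1}^{k}\lambda_{(n_{0})_{i}})^{-1}$, matching \eqref{bgsda22}. Hence $\tilde{h}^{\alpha}(n,s)=\tilde{p}^{\alpha}(n,s)$ for every $n\ge n_{0}$, which completes the argument.

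I expect the main obstacle to be purely technical rather than structural: one must make sure the process $\{T_{2\alpha}(t)\}$ genuinely has a probability density $f_{T_{2\alpha}}(x,t)$ on $(0,\infty)$ (the folded solution of \eqref{diff} integrates to one in $x$), so that the conditioning formula for $h^{\alpha}(n,t)$ is legitimate and the interchange of integrals is justified; this is where the hypothesis on the initial conditions for \eqref{diff} and the known properties of $f_{T_{2\alpha}}$ from Orsingher and Polito (2010) enter. The role of the distinctness assumption on the $\mu_{l}$'s is only to make the closed forms \eqref{pnt} available for a clean comparison; the Laplace-transform identity $\tilde{h}^{\alpha}(n,s)=s^{\alpha-1}\tilde{p}(n,s^{\alpha})=\tilde{p}^{\alpha}(n,s)$ in fact holds without it, so the subordination relation is really an identity at the level of transforms that does not depend on that hypothesis.
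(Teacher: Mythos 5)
Your proposal is correct, but it proceeds differently from the paper's own argument. The paper works at the level of the probability generating function: it writes out $\tilde{G}^{\alpha}(u,s)=\int_{0}^{\infty}e^{-st}\sum_{n\ge n_{0}}u^{n}p^{\alpha}(n,t)\,\mathrm{d}t$, substitutes the explicit closed forms of $p^{\alpha}(n,t)$ from (\ref{pnt}) (which is exactly why the distinctness of the $\mu_{l}$'s is assumed there), rewrites each factor $s^{\alpha-1}/(s^{\alpha}+\mu_{i})$ as $s^{\alpha-1}\int_{0}^{\infty}e^{-\xi s^{\alpha}}e^{-\xi\mu_{i}}\,\mathrm{d}\xi$, recognizes the resulting inner expression as $G(u,\xi)$, the pgf of the GBP, and then uses (\ref{lta}) and uniqueness of the Laplace transform to conclude $G^{\alpha}(u,t)=\int_{0}^{\infty}G(u,\xi)f_{T_{2\alpha}}(\xi,t)\,\mathrm{d}\xi$. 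You instead argue state by state: conditioning on $T_{2\alpha}(t)$ gives $\tilde{h}^{\alpha}(n,s)=s^{\alpha-1}\tilde{p}(n,s^{\alpha})$, and since Proposition \ref{firstp} shows $\tilde{p}(n,\cdot)$ (the $\alpha=1$ case) is a ratio of the same products in $s$, the substitution $s\mapsto s^{\alpha}$ together with the prefactor $s^{\alpha-1}$ reproduces (\ref{bgsda22}) and (\ref{laplacem}) exactly, so uniqueness of the Laplace transform yields $h^{\alpha}(n,t)=p^{\alpha}(n,t)$ for every $n$. Your route buys two things: it avoids the partial-fraction/Mittag-Leffler machinery entirely, and, as you correctly note, it shows the distinctness hypothesis on the $\mu_{l}$'s is not needed for the subordination relation — it enters the paper's proof only because the explicit inverse transforms (\ref{pnt}) are used as an intermediate step. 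The paper's route, on the other hand, packages the conclusion directly as an identity between generating functions, which is what it reuses verbatim for the limiting case $\{\mathcal{X}_{1}^{\alpha}(t)\}$. The only points you should make explicit (and you already flag them) are that $f_{T_{2\alpha}}(\cdot,t)$ is a genuine probability density on $(0,\infty)$ (Orsingher and Polito (2010)) and that the interchange of the $x$- and $t$-integrations is covered by Tonelli, since all integrands are nonnegative; with those remarks your argument is complete.
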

\begin{proof}
Let $\mu_{0}=\sum_{i=1}^{k}\lambda_{(n_{0})_{i}}$. The Laplace transform of the probability generating function of GFBP can be written as
	\begin{align*}
	\tilde{G}^{\alpha}(u,s)&=\int_{0}^{\infty}e^{-st}\sum_{n=n_{0}}^{\infty}u^{n}p^{\alpha}(n,t)\mathrm{d}t\\
	&=\int_{0}^{\infty}e^{-st}\left(u^{n_{0}}p^{\alpha}(n_{0},t)+\sum_{n=n_{0}+1}^{n_{0}+k}u^{n}p^{\alpha}(n,t)+\sum_{n=n_{0}+k+1}^{\infty}u^{n}p^{\alpha}(n,t)\right)\mathrm{d}t\\
	&=u^{n_{0}}\int_{0}^{\infty}e^{-st}\left(p^{\alpha}(n_{0},t)+\sum_{m=1}^{k}u^{m}p^{\alpha}(n_{0}+m,t)+\sum_{m=k+1}^{\infty}u^{m}p^{\alpha}(n_{0}+m,t)\right)\mathrm{d}t\\
	&=u^{n_{0}}\Bigg(\frac{s^{\alpha-1}}{s^{\alpha}+\mu_{0}}+\sum_{m=1}^{k}u^{m}\sum_{\Theta_{m}^{m}}\prod_{j=0}^{m-1}\lambda_{(n_{0}+j)_{x_{j+1}}}\sum_{i=1}^{m^{*}}\frac{s^{\alpha-1}}{\overset{m^{*}}{\underset{l=1, l\neq i}{\prod}}(\mu_{l}-\mu_{i})(s^{\alpha}+\mu_{i})}\\
	&\ \ +\sum_{m=k+1}^{\infty}u^{m}\sum_{\Theta_{m}^{k}}\prod_{j=0}^{m-1}\lambda_{(n_{0}+j)_{x_{j+1}}}\sum_{i=1}^{m^{*}}\frac{s^{\alpha-1}}{\overset{m^{*}}{\underset{l=1, l\neq i}{\prod}}(\mu_{l}-\mu_{i})(s^{\alpha}+\mu_{i})}\Bigg),\ \  \text{(using\ (\ref{mllaplace}) and (\ref{pnt}))}\\
	&=s^{\alpha-1}\int_{0}^{\infty}e^{-\xi s^{\alpha}}\Bigg(u^{n_{0}}e^{-\xi \mu_{0}}+\sum_{m=1}^{k}u^{m+n_{0}}\sum_{\Theta_{m}^{m}}\prod_{j=0}^{m-1}\lambda_{(n_{0}+j)_{x_{j+1}}}\sum_{i=1}^{m^{*}}\frac{e^{-\xi \mu_{i}}}{\overset{m^{*}}{\underset{l=1, l\neq i}{\prod}}(\mu_{l}-\mu_{i})}\\
	&\ \ +\sum_{m=k+1}^{\infty}u^{m+n_{0}}\sum_{\Theta_{m}^{k}}\prod_{j=0}^{m-1}\lambda_{(n_{0}+j)_{x_{j+1}}}\sum_{i=1}^{m^{*}}\frac{e^{-\xi \mu_{i}}}{\overset{m^{*}}{\underset{l=1, l\neq i}{\prod}}(\mu_{l}-\mu_{i})}\Bigg)\mathrm{d}\xi\\
	&=s^{\alpha-1}\int_{0}^{\infty}e^{-\xi s^{\alpha}}\left(u^{n_{0}}p(n_{0},\xi)+\sum_{m=1}^{k}u^{m+n_{0}}p(n_{0}+m,\xi)+\sum_{m=k+1}^{\infty}u^{m+n_{0}}p(n_{0}+m,\xi)\right)\mathrm{d}\xi\\
	&=\int_{0}^{\infty}s^{\alpha-1}e^{-\xi s^{\alpha}}G(u,\xi)\mathrm{d}\xi\\
	&=\int_{0}^{\infty}G(u,\xi)\int_{0}^{\infty}e^{-st}f_{T_{2\alpha}}(\xi,t)\mathrm{d}t \mathrm{d}\xi,\ \  \text{(using\ (\ref{lta}))}\\
	&=\int_{0}^{\infty}e^{-st}\left(\int_{0}^{\infty}G(u,\xi)f_{T_{2\alpha}}(\xi,t)\mathrm{d}\xi\right)\mathrm{d}t .
	\end{align*}
	By uniqueness of Laplace transform, we get
	\begin{equation*}
	G^{\alpha}(u,t)=\int_{0}^{\infty}G(u,\xi)f_{T_{2\alpha}}(\xi,t)\mathrm{d}\xi.
	\end{equation*}	
	This completes the proof.
\end{proof}
\begin{remark}
	For $\alpha=1/2$, the process $\{T_{2\alpha}(t)\}_{t>0}$ becomes a reflecting Brownian motion $\{|B(t)|\}_{t>0}$ (see Beghin and Orsingher (2009)) as the diffusion equation (\ref{diff}) reduces to the heat equation
	\begin{equation*}
	\begin{cases*}
	\frac{\partial}{\partial t}u(x,t)=\frac{\partial^{2}}{\partial x^{2}}u(x,t),\ \ x\in\mathbb{R},\ t>0,\\
	u(x,0)=\delta(x).
	\end{cases*}
	\end{equation*}
	So, $\mathcal{X}^{1/2}(t)$ coincides with GBP at a Brownian time, that is, $\mathcal{X}^{1/2}(t)\overset{d}{=}\mathcal{X}(|B(t)|)$, $t>0$.
\end{remark}

\begin{remark}
	Using (\ref{sub}), we get the following relationship between the pmfs of GFBP and GBP:
	\begin{equation*}
p^{\alpha}(m,t)=\int_{0}^{\infty}p(m,\xi)f_{T_{2\alpha}}(\xi,t)\mathrm{d}\xi.
	\end{equation*}
It implies that $\sum_{m=n_{0}}^{\infty}p^{\alpha}(m,t)=1$ if and only if $\sum_{m=n_{0}}^{\infty}p(m,t)=1$. From Theorem \ref{explosion}, it follows that $\sum_{m=n_{0}}^{\infty}\left(\sum_{i=1}^{k}\sum_{j=1}^{i}\lambda^{2}_{(m-j+1)_{i}}\right)^{-1/2}=\infty$ is the non-exploding condition for GFBP.
\end{remark}
\subsection{A limiting case of GFBP}
  If we consider a process $\{\mathcal{X}_{1}^{\alpha}(t)\}_{t\ge0}$ which has the jumps of any size $j\ge 1$ then from the system (\ref{model2}) of GFBP, we get
\begin{equation*}
\frac{\mathrm{d}^{\alpha}}{\mathrm{d}t^{\alpha}}p^{\alpha}_{1}(n,t)=-\sum_{i=1}^{\infty}\lambda_{(n)_{i}} p^{\alpha}_{1}(n,t)+	\sum_{i=1}^{n}\lambda_{(n-i)_{i}}p^{\alpha}_{1}(n-i,t),\ \ n\ge n_{0},
\end{equation*}
where $p^{\alpha}_{1}(n,t)=\mathrm{Pr}\{\mathcal{X}_{1}^{\alpha}(t)=n\}$, $0<\alpha \le 1$. Here, we assume that  $\sum_{i=1}^{\infty}\lambda_{(n)_{i}}<\infty$ for all $n\ge n_{0}$.

From Theorem \ref{main}, we get its state probabilities as  
	$p^{\alpha}_{1}(n_{0},t)=E_{\alpha,1}\left(-\sum_{i=1}^{\infty}\lambda_{(n_{0})_{i}}t^{\alpha}\right)$,
	and  
	\begin{equation*}
	p^{\alpha}_{1}(n_{0}+m,t)=
\sum_{\Theta_{m}^{m}}\prod_{j=0}^{m-1}\lambda_{(n_{0}+j)_{x_{j+1}}}\frac{(-1)^{m^{*}-1}}{\overset{m^{*}}{\underset{l=2}{\prod}}\mu_{l}}\sum_{i=m^{*}-1}^{\infty}\frac{(-t^{\alpha })^{i}}{\Gamma(i\alpha +1)}\sum_{\Omega^{i}_{m^{*}}}\prod_{l=1}^{m^{*}}\mu_{l}^{y_{l}},\ \ m\ge 1,
	\end{equation*}
where $(x_{1},x_{2},\dots,x_{m})\in \Theta_{m}^{m}$, $(y_{1},y_{2},\dots,y_{m^{*}})\in \Omega^{i}_{m^{*}}$, $m^{*}=|\Lambda_{m}|$ and  $\mu_{l}=\sum_{i=1}^{\infty}\lambda_{(n_{0}+(j)_{l})_{i}}$,  $(j)_{l}$'s are given by (\ref{(j)}). If $\mu_{l}$, $l=1,2,\dots,m^{*}$ are distinct then
	\begin{equation*}
	p^{\alpha}_{1}(n_{0}+m,t)=
	\sum_{\Theta_{m}^{m}}\prod_{j=0}^{m-1}\lambda_{(n_{0}+j)_{x_{j+1}}}\sum_{i=1}^{m^{*}}\frac{E_{\alpha,1}(-\mu_{i}t^{\alpha})}{\overset{m^{*}}{\underset{l=1, l\neq i}{\prod}}(\mu_{l}-\mu_{i})}.
	\end{equation*}
	
Let $\{\mathcal{X}_{1}(t)\}_{t\ge 0}$ denote the non-fractional version of $\{\mathcal{X}_{1}^{\alpha}(t)\}_{t>0}$. The following relationship holds:
	\begin{equation*}
	\mathcal{X}^{\alpha}_{1}(t)\overset{d}{=}\mathcal{X}_{1}(T_{2\alpha}(t)),
	\end{equation*}
where $\{T_{2\alpha}(t)\}_{t>0}$ is  independent of $\{\mathcal{X}_{1}(t)\}_{t>0}$. Its proof follows along the similar lines to that of Theorem \ref{subordination}. 


\section{A State dependent version of the GFBP}
Garra {\it et al.} (2015) introduced and studied the state dependent versions of TFPP and FPBP. Here, we introduce a state dependent version of the GFBP. We denote it by $\{\mathcal{Q}(t)\}_{t\ge0}$. It is defined as the stochastic process whose 
state probabilities  $q(n,t)=\mathrm{Pr}\{\mathcal{Q}(t)=n\}$ satisfy the following system of fractional differential equations:
\begin{equation}\label{model3}
\frac{\mathrm{d}^{\alpha_{n}}}{\mathrm{d}t^{\alpha_{n}}}q(n,t)=-\sum_{i=1}^{k}\lambda_{(n)_{i}}q(n,t)+	\sum_{i=1}^{\min\{n,k\}}\lambda_{(n-i)_{i}}q(n-i,t),\ \ 0<\alpha_{n} \le 1,\ n\ge n_{0},
\end{equation}
with the initial conditions
\begin{equation*}
q(n,0)=\begin{cases}
1,\ \ n=n_{0},\\
0,\ \ n>n_{0}.
\end{cases}
\end{equation*}
Note that the System (\ref{model3}) is obtained by taking variable order fractional derivatives in (\ref{model2}). The Caputo fractional derivative of state probability is the convolution of rate of change in state probability and a suitable weight function, that is,
\begin{equation*}
\frac{\mathrm{d}^{\alpha_{n}}}{\mathrm{d}t^{\alpha_{n}}}q(n,t)=\frac{\mathrm{d}}{\mathrm{d}t}q(n,t)*\frac{t^{-\alpha_{n}}}{\Gamma(1-\alpha_{n})}.
\end{equation*}
Here, the power of the weight function depends  on the actual state of the process at time $t\geq0$. Thus, the number of events that have occurred till time $t$ modifies the order of fractional derivative.

 Next, we give the Laplace transform of the state probabilities of state dependent GFBP. Its proof follows similar lines to that of Proposition \ref{firstp}. Let $\Theta_{n}^{k}$, $\Lambda_{n}$ {\it etc.} be as defined in Section \ref{section3}.
\begin{proposition}\label{firstp2}
	For $n=n_0$, the Laplace transform of the state probability of $\{\mathcal{Q}(t)\}_{t\ge0}$ is given by
	\begin{equation*}
	\tilde{q}(n_{0},s)=	\dfrac{s^{\alpha_{n_{0}}-1}}{s^{\alpha_{n_{0}}}+\sum_{i=1}^{k}\lambda_{(n_{0})_{i}}}
	\end{equation*}
	and for $n=n_0+m$, $m\ge 1$, it is given by
	\begin{equation*}
	\tilde{q}(n_{0}+m,s)=\begin{cases*}
	\displaystyle\sum_{\Theta_{m}^{k}}\frac{s^{\alpha_{n_{0}}-1}\prod_{j=0}^{m-1}\lambda_{(n_{0}+j)_{x_{j+1}}}}{\underset{j\in \Lambda_{m}}{\prod}\left(s^{\alpha_{n_{0}+j}}+\sum_{i=1}^{k}\lambda_{(n_{0}+j)_{i}}\right)},\ \ 1\le k\le m-1,\\
	\displaystyle\sum_{\Theta_{m}^{m}}\frac{s^{\alpha_{n_{0}}-1}\prod_{j=0}^{m-1}\lambda_{(n_{0}+j)_{x_{j+1}}}}{\underset{j\in \Lambda_{m}}{\prod}\left(s^{\alpha_{n_{0}+j}}+\sum_{i=1}^{k}\lambda_{(n_{0}+j)_{i}}\right)},\ \  k\ge m,
	\end{cases*}
	\end{equation*}
	where $(x_{1},x_{2},\dots,x_{m})\in \Theta_{m}^{k}$.
\end{proposition}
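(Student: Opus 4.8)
The plan is to mirror, essentially verbatim, the proof of Proposition \ref{firstp}; the only structural differences are that the sole initial condition now sits at level $n_{0}$ and that each level $n$ enters the Laplace domain with its own exponent $\alpha_{n}$. First I would put $n=n_{0}$ in (\ref{model3}) and use $q(m,t)=0$ for $m<n_{0}$ to reduce that system to $\frac{\mathrm{d}^{\alpha_{n_{0}}}}{\mathrm{d}t^{\alpha_{n_{0}}}}q(n_{0},t)=-\sum_{i=1}^{k}\lambda_{(n_{0})_{i}}q(n_{0},t)$; applying (\ref{laplace}) with $\alpha=\alpha_{n_{0}}$ and $q(n_{0},0)=1$ gives $s^{\alpha_{n_{0}}}\tilde{q}(n_{0},s)-s^{\alpha_{n_{0}}-1}=-\sum_{i=1}^{k}\lambda_{(n_{0})_{i}}\tilde{q}(n_{0},s)$, which rearranges to the first displayed formula.

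Next, for $n=n_{0}+m$ with $m\ge 1$ the initial value is $q(n,0)=0$, so (\ref{laplace}) turns (\ref{model3}) into the first-order recursion
\[
\Bigl(s^{\alpha_{n_{0}+m}}+\sum_{i=1}^{k}\lambda_{(n_{0}+m)_{i}}\Bigr)\tilde{q}(n_{0}+m,s)=\sum_{i=1}^{\min\{m,k\}}\lambda_{(n_{0}+m-i)_{i}}\,\tilde{q}(n_{0}+m-i,s),
\]
where I have used again that $\tilde{q}(n_{0}+m-i,s)=0$ for $i>m$. This is exactly the recursion appearing in the proof of Proposition \ref{firstp}, with the constant symbol $s^{\alpha}$ in the denominator replaced by the state-dependent $s^{\alpha_{n_{0}+m}}$. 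Unfolding it down to $\tilde{q}(n_{0},s)$ and arguing by induction on $m$, one checks that: each application of the recursion contributes a numerator factor $\lambda_{(n_{0}+j)_{x_{j+1}}}$ recording the jump taken at level $n_{0}+j$, so that the admissible jump-histories of length $m$ are indexed precisely by $\Theta_{m}^{k}$ (which equals $\Theta_{m}^{m}$ once $k\ge m$); the denominators accumulated along such a history are $s^{\alpha_{n_{0}+j}}+\sum_{i=1}^{k}\lambda_{(n_{0}+j)_{i}}$ for $j\in\Lambda_{m}$; and the factor $s^{\alpha_{n_{0}}-1}$ created at the base level is simply carried along, since every subsequent step is a single division that introduces no new power of $s$ in the numerator. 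Assembling these pieces produces the stated expression for $\tilde{q}(n_{0}+m,s)$.

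The hard part is not a new argument but a verification: that the combinatorial identification of the unfolded recursion with the sum over $\Theta_{m}^{k}$, together with the description of the denominator index set $\Lambda_{m}$ via (\ref{theta}) and (\ref{(j)}), goes through unchanged. This is the case because the orders $\alpha_{n}$ enter only through the denominator symbols $s^{\alpha_{n}}+\sum_{i=1}^{k}\lambda_{(n)_{i}}$ and never through the set-valued bookkeeping, so the counting argument from the proof of Proposition \ref{firstp} transfers literally. The single point worth stating explicitly is that the numerator exponent is $\alpha_{n_{0}}-1$ and not some combination of the $\alpha_{n_{0}+j}$'s; this is immediate from the two steps above, since that power of $s$ is produced exactly once, at level $n_{0}$, and is thereafter only multiplied through.
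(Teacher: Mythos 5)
Your proposal is correct and is essentially the paper's own argument: the paper proves Proposition \ref{firstp2} only by remarking that it ``follows similar lines to that of Proposition \ref{firstp}'', i.e., take Laplace transforms of (\ref{model3}) term by term, use the initial conditions so that only the base level $n_{0}$ contributes a numerator power of $s$, and unfold the resulting recursion over jump histories indexed by $\Theta_{m}^{k}$, exactly as you do. Your explicit observation that the numerator exponent is $s^{\alpha_{n_{0}}-1}$ (produced once at level $n_{0}$ and merely carried along) while the orders $\alpha_{n_{0}+j}$ enter only through the denominator factors is precisely the bookkeeping the paper relies on.
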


 The following result holds (see Kataria and Vellaisamy (2019), Eq. (9) and Eq. (39)):
\begin{equation}\label{result}
\mathcal{L}^{-1}\left(\frac{s^{\alpha_{1}-1}}{\prod_{j=1}^{n}(s^{\alpha_{j}}+\lambda_{j})};t\right)=\frac{(-1)^{n-1}}{\overset{n}{\underset{j=2}{\prod}}\lambda_{j}}\sum_{i=n-1}^{\infty}(-1)^i\underset{\Omega^i_n}{\sum}\frac{t^{\sum_{j=1}^ny_j\alpha_j}\prod_{j=1}^n\lambda_j^{y_j}}{\Gamma(1+\sum_{j=1}^ny_j\alpha_j)},\ \ n\geq1,
\end{equation}
where $\lambda_{j}$'s are positive real numbers and  $\Omega^{i}_{n}=\{(y_{1},y_{2},\dots,y_{n}):\sum_{j=1}^{n}y_{j}=i, \ y_{1}\in \mathbb{N}_{0}, \ y_{j}\in \mathbb{N}_{0}\setminus\{0\}, \ 2\le j \le n\}$.

The state probabilities $q(n,t)$ for all $n\geq n_0$ can be obtained by taking inverse Laplace transform in Proposition \ref{firstp2} and by using (\ref{result}).
\begin{theorem}\label{main3}
	For $n=n_{0}$, the state probability of $\{\mathcal{Q}(t)\}_{t\ge0}$ is given by
	$q(n_{0},t)=E_{\alpha_{n_{0}},1}\left(-\sum_{i=1}^{k}\lambda_{(n_{0})_{i}}t^{\alpha_{n_{0}}}\right)$,
	and for $n=n_0+m$, $m\ge 1$, it is given by
	\begin{equation*}
	q(n_{0}+m,t)=\begin{cases*}
	\displaystyle\sum_{\Theta_{m}^{k}}\prod_{j=0}^{m-1}\lambda_{(n_{0}+j)_{x_{j+1}}}\mathcal{L}^{-1}\left(\frac{s^{\alpha_{n_{0}}-1}}{\prod_{l=1}^{m^{*}}(s^{\alpha_{n_{0}+(j)_{l}}}+\mu_{l})};t\right),\ \ 1\le k\le m-1,\\
	\displaystyle\sum_{\Theta_{m}^{m}}\prod_{j=0}^{m-1}\lambda_{(n_{0}+j)_{x_{j+1}}}\mathcal{L}^{-1}\left(\frac{s^{\alpha_{n_{0}}-1}}{\prod_{l=1}^{m^{*}}(s^{\alpha_{n_{0}+(j)_{l}}}+\mu_{l})};t\right),\ \  k\ge m,
	\end{cases*}
	\end{equation*}
	where 	$m^{*}=|\Lambda_{m}|$, $\mu_{l}=\sum_{i=1}^{k}\lambda_{(n_{0}+(j)_{l})_{i}}$,  $(j)_{l}$'s are given by (\ref{(j)}) and
	\begin{equation*}
\mathcal{L}^{-1}\left(\frac{s^{\alpha_{n_{0}}-1}}{\prod_{l=1}^{m^{*}}(s^{\alpha_{n_{0}+(j)_{l}}}+\mu_{l})};t\right)=	\frac{(-1)^{m^{*}-1}}{\overset{m^{*}}{\underset{l=2}{\prod}}\mu_{l}}\sum_{i=m^{*}-1}^{\infty}(-1)^{i}\sum_{\Omega^{i}_{m^{*}}}\frac{t^{\sum_{l=1}^{m^{*}}y_{l}\alpha_{n_{0}+(j)_{l}}}}{\Gamma\left(1+\sum_{l=1}^{m^{*}}y_{l}\alpha_{n_{0}+(j)_{l}}\right)}\prod_{l=1}^{m^{*}}\mu_{l}^{y_{l}}.
	\end{equation*}
	\end{theorem}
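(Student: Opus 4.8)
The plan is to read off $q(n,t)$ from the Laplace transforms recorded in Proposition \ref{firstp2} by inverting them, mimicking the passage from Proposition \ref{firstp} to Theorem \ref{main}, but with the variable-order inversion identity (\ref{result}) playing the role that Propositions \ref{prop3} and \ref{prop2} played there. For $n=n_{0}$ this is immediate: $\tilde q(n_{0},s)=s^{\alpha_{n_{0}}-1}/(s^{\alpha_{n_{0}}}+\sum_{i=1}^{k}\lambda_{(n_{0})_{i}})$ has exactly the form of the right-hand side of (\ref{mllaplace}) with $\alpha=\alpha_{n_{0}}$ and $\omega=\sum_{i=1}^{k}\lambda_{(n_{0})_{i}}$, so $q(n_{0},t)=E_{\alpha_{n_{0}},1}\!\left(-\sum_{i=1}^{k}\lambda_{(n_{0})_{i}}t^{\alpha_{n_{0}}}\right)$.

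For $n=n_{0}+m$ with $m\ge 1$ I would first rewrite, inside each summand over $\Theta_{m}^{k}$, the denominator $\prod_{j\in\Lambda_{m}}(s^{\alpha_{n_{0}+j}}+\sum_{i=1}^{k}\lambda_{(n_{0}+j)_{i}})$ from Proposition \ref{firstp2}. Since the indices $(j)_{1},(j)_{2},\dots,(j)_{m^{*}}$ of (\ref{(j)}) enumerate, in increasing order, the $m^{*}=|\Lambda_{m}|$ elements of $\Lambda_{m}$ — with $(j)_{1}=0$ and $(j)_{m^{*}}=m$ — this product equals $\prod_{l=1}^{m^{*}}(s^{\alpha_{n_{0}+(j)_{l}}}+\mu_{l})$, where $\mu_{l}=\sum_{i=1}^{k}\lambda_{(n_{0}+(j)_{l})_{i}}$ is a finite sum of positive rates, hence positive. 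The numerator factor $\prod_{j=0}^{m-1}\lambda_{(n_{0}+j)_{x_{j+1}}}$ is independent of $s$, so it can be pulled out of the inverse transform, and since $(j)_{1}=0$ the leftover $s$-dependent piece is exactly $s^{\alpha_{n_{0}+(j)_{1}}-1}/\prod_{l=1}^{m^{*}}(s^{\alpha_{n_{0}+(j)_{l}}}+\mu_{l})$, which matches the left-hand side of (\ref{result}) after the substitutions $n\mapsto m^{*}$, $\alpha_{j}\mapsto\alpha_{n_{0}+(j)_{l}}$, $\lambda_{j}\mapsto\mu_{l}$.

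Applying (\ref{result}) term by term over $\Theta_{m}^{k}$ (respectively over $\Theta_{m}^{m}$ when $k\ge m$) and summing then produces the asserted expression for $q(n_{0}+m,t)$ together with the displayed formula for the inner inverse transform. The main (and only) difficulty is bookkeeping rather than analysis: one must confirm that the recursion $\tilde q(n_{0}+m,s)=\bigl(s^{\alpha_{n_{0}+m}}+\sum_{i=1}^{k}\lambda_{(n_{0}+m)_{i}}\bigr)^{-1}\sum_{i}\lambda_{(n_{0}+m-i)_{i}}\tilde q(n_{0}+m-i,s)$ underlying Proposition \ref{firstp2} propagates a single factor $s^{\alpha_{n_{0}}-1}$ inherited from $\tilde q(n_{0},s)$ and introduces no further $s^{\alpha-1}$ factors at later steps, and that the sets $\Lambda_{m}$ and the enumeration $(j)_{l}$ are consistent so that each denominator factor $s^{\alpha_{n_{0}+(j)_{l}}}+\mu_{l}$ occurs with multiplicity one; these points are already established verbatim in the derivation of Proposition \ref{firstp}/Theorem \ref{main} and need no new idea here.
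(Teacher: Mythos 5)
Your proposal is correct and follows exactly the route the paper takes: the paper itself obtains Theorem \ref{main3} by inverting the Laplace transforms of Proposition \ref{firstp2} term by term via the variable-order identity (\ref{result}), with the $n=n_{0}$ case handled by (\ref{mllaplace}) at $\alpha=\alpha_{n_{0}}$, precisely as you describe. Your bookkeeping remarks about $\Lambda_{m}$, the enumeration $(j)_{l}$, and the single factor $s^{\alpha_{n_{0}}-1}$ are consistent with the definitions in Section \ref{section3} and add nothing beyond what the paper's argument already uses.
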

\begin{remark}
	Let $W_{1}$ denote the first waiting time of state dependent GFBP. Its distribution is given by
	\begin{equation*}
	\mathrm{Pr}\{W_{1}>t\}=\mathrm{Pr}\{\mathcal{Q}(t)=n_{0}\}=E_{\alpha_{n_{0}},1}\left(-\sum_{i=1}^{k}\lambda_{(n_{0})_{i}}t^{\alpha_{n_{0}}}\right).
	\end{equation*}
\end{remark}

 If we consider a process $\{\mathcal{Q}_{1}(t)\}_{t\ge0}$ which has the jumps of any size $j\ge 1$ then from the system (\ref{model3}) of $\{\mathcal{Q}(t)\}_{t\ge0}$, we get
 \begin{equation*}
 \frac{\mathrm{d}^{\alpha_{n}}}{\mathrm{d}t^{\alpha_{n}}}q_{1}(n,t)=-\sum_{i=1}^{\infty}\lambda_{(n)_{i}}q_{1}(n,t)+	\sum_{i=1}^{n}\lambda_{(n-i)_{i}}q_{1}(n-i,t),\ \ 0<\alpha_{n} \le 1,\ n\ge n_{0},
 \end{equation*}
  where $q_{1}(n,t)=\mathrm{Pr}\{\mathcal{Q}_{1}(t)=n\}$. Here, we assume that  $\sum_{i=1}^{\infty}\lambda_{(n)_{i}}<\infty$ for all $n\ge n_{0}$.
 
Its state probabilities are given by
	$q_{1}(n_{0},t)=E_{\alpha_{n_{0}},1}\left(-\sum_{i=1}^{\infty}\lambda_{(n_{0})_{i}}t^{\alpha_{n_{0}}}\right)$
	and 
	\begin{equation*}
	q_{1}(n_{0}+m,t)=\sum_{\Theta_{m}^{m}}\prod_{j=0}^{m-1}\lambda_{(n_{0}+j)_{x_{j+1}}}\mathcal{L}^{-1}\left(\frac{s^{\alpha_{n_{0}}-1}}{\prod_{l=1}^{m^{*}}(s^{\alpha_{n_{0}+(j)_{l}}}+\mu_{l})};t\right),\ \  m\ge 1,
	\end{equation*}
	where $\mu_{l}=\sum_{i=1}^{\infty}\lambda_{(n_{0}+(j)_{l})_{i}}$.

\end{document}